\theoremstyle{plain}
\newtheorem{prop}{Proposition}
\newtheorem{thm}[prop]{Theorem}
\newtheorem{lem}[prop]{Lemma}
\newtheorem{cor}[prop]{Corollary}
\numberwithin{prop}{section}
\newcolumntype{m}{>{$}c<{$}}
\theoremstyle{definition}
\newtheorem{defn}[prop]{Definition}
\newtheorem{rem}[prop]{Remark}
\newenvironment{psmallmatrix}
  {\left(\begin{smallmatrix}}
  {\end{smallmatrix}\right)}
\begin{document}

\title{Siegel modular forms of degree two and level five}

\author{Haowu Wang}

\address{Center for Geometry and Physics, Institute for Basic Science (IBS), Pohang 37673, Korea}

\email{haowu.wangmath@gmail.com}

\author{Brandon Williams}

\address{Lehrstuhl A für Mathematik, RWTH Aachen, 52056 Aachen, Germany}

\email{brandon.williams@matha.rwth-aachen.de}

\subjclass[2010]{11F46,11F27}

\date{\today}

\keywords{Siegel modular forms, Borcherds lifts, Rings of modular forms}

\begin{abstract}
We construct a ring of meromorphic Siegel modular forms of degree $2$ and level $5$, with singularities supported on an arrangement of Humbert surfaces, which is generated by four singular theta lifts of weights $1$, $1$, $2$, $2$ and their Jacobian. We use this to prove that the ring of holomorphic Siegel modular forms of degree $2$ and level $\Gamma_0(5)$ is minimally generated by eighteen modular forms of weights $2, 4, 4, 4, 4, 4, 6, 6, 6, 6, 10, 11, 11, 11, 13, 13, 13, 15$.
\end{abstract}

\maketitle

\section{Introduction}

It is an interesting problem to determine the structure of rings of Siegel modular forms with respect to congruence subgroups. A famous theorem of Igusa \cite{I62} shows that every Siegel modular form of degree two and even weight for the full modular group $\mathrm{Sp}_4(\mathbb{Z})$ can be written uniquely as a polynomial in forms $\phi_4, \phi_6, \phi_{10}, \phi_{12}$ of weights $4, 6, 10, 12$, and that odd weight Siegel modular forms are precisely the products of even weight Siegel modular forms with a distinguished cusp form $\psi_{35}$ of weight $35$. It was proved by Aoki and Ibukiyama \cite{AI} that the rings of modular forms for the congruence subgroups $$\Gamma_{0, 1}^{(2)}(N) = \Big\{ \begin{psmallmatrix} a & b \\ c & d \end{psmallmatrix} \in \mathrm{Sp}_4(\mathbb{Z}): \; c \equiv 0 \, (N), \; \mathrm{det}(a) \equiv \mathrm{det}(d) \equiv 1 \, (N)\Big\}, \; N = 2, 3, 4$$ have an analogous structure: they are generated by four algebraic independent modular forms together with their Jacobian (or first Rankin--Cohen--Ibukiyama bracket). The rings $M_*(\Gamma_0^{(2)}(N))$ where $$\Gamma_0^{(2)}(N) = \{\begin{psmallmatrix} a & b \\ c & d \end{psmallmatrix}\in \mathrm{Sp}_4(\mathbb{Z}): \, c \equiv 0 \, (N)\}$$ therefore have a simple structure as well.

The goal in this paper is to extend the methods of Aoki and Ibukiyama to level $N=5$. This is not quite straightforward, as the natural underlying ring is no longer  $M_*(\Gamma_{0,1}^{(2)}(5))$ but rather a ring $M_*^!(\Gamma_{0,1}^{(2)}(5))$ of meromorphic Siegel modular forms with singularities on Humbert surfaces. We will define a hyperplane arrangement $\mathcal{H}$ as the $\Gamma_0^{(2)}(5)$-orbit of the Humbert surface $$\{Z = \begin{psmallmatrix} \tau & z \\ z & w \end{psmallmatrix} \in \mathbb{H}_2: \; \mathrm{det}(Z) = 1 - 5z\},$$ 
which, if one views points in $\mathbb{H}_2$ as parameterizing abelian surfaces, is a locus of principally polarized abelian surfaces with real multiplication that respects a $\Gamma_0^{(2)}(5)$ level structure. We then investigate the ring $M_*^!(\Gamma_{0, 1}^{(2)}(5))$ of meromorphic Siegel modular forms on $\Gamma_{0, 1}^{(2)}(5)$ with singularities supported on $\mathcal{H}$.    Using a generalization of the modular Jacobian approach of \cite{Wan21a}, we prove in Theorem \ref{th:singular1} that $M_*^!(\Gamma_{0, 1}^{(2)}(5))$ is generated by four algebraically independent singular additive lifts $f_1, f_2, g_1, g_2$ of weights $1$, $1$, $2$, $2$ and by their Jacobian; in particular, the associated threefold $X_{0, 1}^{(2)}(5)$ is rational.  The local isomorphism from $\mathrm{Sp}_4$ to $\mathrm{SO}(3,2)$ and Borcherds' theory of orthogonal modular forms with singularities are essential. $\mathrm{Proj}(M_*^!(\Gamma_{0, 1}^{(2)}(5)))$ is the Looijenga compactification \cite{Loo03b} of the complement $(\mathbb{H}_2 \setminus \mathcal{H}) / \Gamma_{0, 1}^{(2)}(5)$, which plays a similar role to the Satake--Baily--Borel compactification of $Y_{0,1}^{(2)}(5)$. 

%and it is in this case a simple weighted projective space.

It follows from the above that every Siegel modular form of level $\Gamma_0^{(2)}(5)$ can be expressed uniquely in terms of the basic forms $f_1, f_2, g_1, g_2$. It is not clear to the authors how to compute the ring $M_*(\Gamma_0^{(2)}(5))$ of (holomorphic) Siegel modular forms from this information alone; however, allowing a formula of Hashimoto \cite{H} for the dimensions of cusp forms (itself an application of the Selberg trace formula), the ring structure becomes a straightforward Gr\"obner basis computation. We will prove that $M_*(\Gamma_0^{(2)}(5))$ is minimally generated by eighteen modular forms of weights $2, 4, 4, 4, 4, 4, 6, 6, 6, 6, 10, 11, 11, 11, 13, 13, 13, 15$ in Theorem \ref{thm:lvl5hol}.

This paper is organized as follows. In \S \ref{sec:pre} we review the realization of Siegel modular groups as orthogonal groups and the theory of Borcherds lifts. In \S \ref{sec:meromorphic} we determine two rings of meromorphic Siegel modular forms. In \S \ref{sec:holomorphic} we use this to determine the ring of holomorphic Siegel modular forms for $\Gamma_{0}^{(2)}(5)$.  

\section{Theta lifts to Siegel modular forms of degree two}\label{sec:pre}

\subsection{$\Gamma_0^{(2)}(N)$ as an orthogonal group}
Recall that the Pfaffian of an antisymmetric $(4 \times 4)$-matrix $M$ is $$\mathrm{pf}(M) = \mathrm{pf}\begin{psmallmatrix} 0 & a & b & c \\ -a & 0 & d & e \\ -b & -d & 0 & f \\ -c & -e & -f & 0 \end{psmallmatrix} = af - be + cd.$$ We view $\mathrm{pf}$ as a quadratic form and define the associated bilinear form $$\langle x, y \rangle = \mathrm{pf}(x + y) - \mathrm{pf}(x) - \mathrm{pf}(y).$$ The Pfaffian is invariant under conjugation $M \mapsto A^T M A$ by $A \in \mathrm{SL}_4(\mathbb{R})$ and this action identifies $\mathrm{SL}_4(\mathbb{R})$ with the Spin group $\mathrm{Spin}(\mathrm{pf})$. The symplectic group $\mathrm{Sp}_4(\mathbb{R})$, by definition, preserves $$\mathcal{J} = \begin{psmallmatrix} 0 & 0 & -1 & 0 \\ 0 & 0 & 0 & -1 \\ 1 & 0 & 0 & 0 \\ 0 & 1 & 0 & 0 \end{psmallmatrix}$$ under conjugation, so it also preserves the orthogonal complement $\mathcal{J}^{\perp}$; and indeed it is exactly the Spin group of $\mathrm{pf}$ restricted to $\mathcal{J}^{\perp}$. If the entries of $M$ are labelled as above then $M \in \mathcal{J}^{\perp}$ if and only if $b + e = 0$.

For any $N \in \mathbb{N}$, the group $\Gamma_0^{(2)}(N) = \{\begin{psmallmatrix} a & b \\ c & d \end{psmallmatrix} \in \mathrm{Sp}_4(\mathbb{Z}): \; c \equiv 0 \, (N)\}$ stabilizes the lattice $$L = \Big\{ M = \begin{psmallmatrix} 0 & a & b & c \\ -a & 0 & d & -b \\ -b & -d & 0 & f \\ -c & b & -f & 0 \end{psmallmatrix}: \; a, b, c, d, f \in \mathbb{Z}, \; a \equiv 0 \, (N)\Big\}$$ which is of type $U \oplus U(N) \oplus A_1$.
By \cite[\S 2]{HK} the special discriminant kernel $\widetilde{\mathrm{SO}}(L)$ of $L$ is exactly the projective modular group $\Gamma_{0, 1}^{(2)}(N) / \{\pm I\}$ under this identification, where
\begin{align*}
\widetilde{\mathrm{SO}}(L)&=\{ g \in \mathrm{SO}(L):\; g(v)-v\in L \; \text{for all}\; v \in L' \},\\
\Gamma_{0, 1}^{(2)}(N) &= \{\begin{psmallmatrix} a & b \\ c & d \end{psmallmatrix} \in \Gamma_0^{(2)}(N): \; \mathrm{det}(a) \equiv 1 \, (N)\}.
\end{align*}
It follows that $$\Gamma_0^{(2)}(N) / \{\pm I\} = \langle \widetilde{\mathrm{SO}}(L), \; \varepsilon_u: \; u \in (\mathbb{Z} / N\mathbb{Z})^{\times} \rangle$$ where $\varepsilon_u$ is the matrix $$\varepsilon_u = \begin{pmatrix} u & 0 & b & 0 \\ 0 & 1 & 0 & 0 \\ N & 0 & u^* & 0 \\ 0 & 0 & 0 & 1 \end{pmatrix} \in \Gamma_0^{(2)}(N)$$ for any integer solutions $u^*, b$ to $uu^* - Nb = 1$ (the choice does not matter). The map induced by $\varepsilon_u$ on $L'/L \cong A_1'/A_1 \oplus U(N)'/U(N)$ acts trivially on $A_1'/A_1$ and acts on $U(N)'/U(N) \cong \mathbb{Z}/N\mathbb{Z} \oplus \mathbb{Z}/N\mathbb{Z}$ as the map $$\varepsilon_u : \mathbb{Z}/N\mathbb{Z} \oplus \mathbb{Z}/N\mathbb{Z} \rightarrow \mathbb{Z}/N\mathbb{Z} \oplus \mathbb{Z}/N\mathbb{Z}, \; (x, y) \mapsto (ux, u^{-1}y).$$

The symplectic group $\mathrm{Sp}_4(\mathbb{R})$ acts on the Siegel upper half-space $\mathbb{H}_2$ by M\"obius transformations: $$M \cdot Z = \begin{pmatrix} a & b \\ c & d \end{pmatrix} \cdot Z = (aZ + b)(cZ + d)^{-1}.$$ Let $j(M; Z) = \mathrm{det}(c Z + d)$ be the usual automorphy factor. We embed the Siegel upper half-space into $L \otimes \mathbb{C}$ as follows: $$Z = \begin{pmatrix} \tau & z \\ z & w \end{pmatrix} \mapsto \mathcal{Z} := \phi(Z) := \begin{pmatrix} 0 & 1 & z & w \\ -1 & 0 & -\tau & -z \\ -z & \tau & 0 & \tau w - z^2 \\ -w & z & z^2 - \tau w & 0 \end{pmatrix}.$$ Then one has the relation $$M^T \mathcal{Z} M = j(M; Z) \phi(M \cdot Z), \; M \in \mathrm{Sp}_4(\mathbb{R})$$ as one can check on any system of generators.

For any $\lambda \in L'$ of positive norm $D = Q(\lambda)$, the space $$\{Z \in \mathbb{H}_2: \; \mathcal{Z} \in \lambda^{\perp}\}$$ is known as a \textbf{Humbert surface} $H(D, \lambda)$ of discriminant $D$. If $\lambda$ is written in the form $\begin{psmallmatrix} 0 & a & b & c \\ -a & 0 & d & -b \\ -b & -d & 0 & f \\ -c & b & -f & 0 \end{psmallmatrix}$ then $$H(D, \lambda) = \{Z = \begin{psmallmatrix} \tau & z \\ z & w \end{psmallmatrix} \in \mathbb{H}_2: \; a \mathrm{det}(Z) -c \tau + 2bz + dw + f = 0 \}.$$ If $\gamma$ instead is a coset of $L'/L$, then we define $$H(D, \gamma) = \sum_{\substack{\lambda \in \gamma \\ \lambda \; \text{primitive in}\; L' \\ Q(\lambda) = D}} H(D, \lambda).$$ These unions are locally finite and therefore descend to well-defined divisors on $\widetilde{\mathrm{O}}(L) \backslash \mathbb{H}_2.$ We will use the notation $H(D, \pm \gamma)$ because $\lambda^{\perp} = (-\lambda)^{\perp}$ implies $H(D, \gamma) = H(D, -\gamma)$. Note that many references omit the condition that $\lambda$ be primitive in $L'$, so $H(D, \pm \gamma)$ satisfy inclusions; our divisors $H(D, \pm \gamma)$ do not.

\subsection{Theta lifts}
Let $L$ be the lattice in the space of $(4 \times 4)$ antisymmetric matrices from the previous subsection. The \textbf{weight $k$ theta kernel} is $$\Theta_k(\tau; Z) = \frac{\pi^k}{\mathrm{det}(V)^k \Gamma(k)} \sum_{\lambda \in L'} \langle \lambda, \mathcal{Z} \rangle^k e^{-\frac{\pi y}{\mathrm{det}(V)} | \langle \lambda, \mathcal{Z} \rangle|^2} e^{2\pi i \overline{\tau} \mathrm{pf}(\lambda)} \mathfrak{e}_{\lambda},$$ where $\tau = x+iy \in \mathbb{H}$; and $Z = U+iV \in \mathbb{H}_2$; and $\mathcal{Z}$ is the image of $Z$ in $L \otimes \mathbb{C}$. By applying a theorem of Vign\'eras on indefinite theta series \cite{V} one sees that $\Theta_k$ transforms like a modular form of weight $\kappa:= k -1/2$ with respect to the Weil representation $\rho_L$. On the other hand, for any $M = \begin{psmallmatrix} a & b \\ c & d \end{psmallmatrix} \in \Gamma_0^{(2)}(N)$,
\begin{align*} \Theta_k(\tau; M \cdot Z) &= \frac{\pi^k}{\mathrm{det} \, \mathrm{im}(M \cdot Z)^k \Gamma(k)} \sum_{\lambda \in L'} \mathrm{det}(cZ + d)^{-k} \langle \lambda, M^T \mathcal{Z} M \rangle e^{-\frac{\pi y}{\mathrm{det}(V)} | \langle \lambda, M^T \mathcal{Z} M \rangle|^2} e^{2\pi i \overline{\tau} \mathrm{pf}(\lambda)} \mathfrak{e}_{\lambda} \\ &= \frac{\pi^k}{\mathrm{det}(V)^k \Gamma(k)} \overline{\mathrm{det}(cZ + d)^k} \sum_{\lambda \in L'} \langle M^{-T} \lambda M^{-1}, \mathcal{Z} \rangle^k e^{-\frac{\pi y}{\mathrm{det}(V)} | \langle M^{-T} \lambda M^{-1},  \mathcal{Z} \rangle|^2} e^{2\pi i \overline{\tau} \mathrm{pf}(M^{-T} \lambda M^{-1})} \mathfrak{e}_{\lambda} \\ &= \overline{\mathrm{det}(cZ + d)^k} \sigma(M) \Theta_k(\tau; Z),   \end{align*} where $\sigma$ is the map $$\sigma : \Gamma_0^{(2)}(N) \longrightarrow \mathrm{Aut} \, \mathbb{C}[L'/L], \; \; \sigma(M) \mathfrak{e}_{\lambda} := \mathfrak{e}_{M^T \lambda M}.$$

Following Borcherds \cite{B} one defines the theta lift of a vector-valued modular form $F$ with a pole at $\infty$ as the regularized integral of $F$ against the kernel $\Theta_k$:

\begin{defn} (i) The \emph{Weil representation} $\rho_L$ associated to an even lattice $(L, Q)$ is the representation $\rho : \mathrm{Mp}_2(\mathbb{Z}) \rightarrow \mathrm{GL}\, \mathbb{C}[L'/L]$ defined by $$\rho\left( \begin{psmallmatrix} 1 & 1 \\ 0 & 1 \end{psmallmatrix}, 1 \right) \mathfrak{e}_{\gamma} = e^{-2\pi i Q(\gamma)}\mathfrak{e}_{\gamma};$$ $$\rho\left( \begin{psmallmatrix} 0 & -1 \\ 1 & 0 \end{psmallmatrix}, \sqrt{\tau} \right) \mathfrak{e}_{\gamma} = e^{\pi i \mathrm{sig}(L)/4} |L'/L|^{-1/2} \sum_{\beta \in L'/L} e^{2\pi i \langle \gamma, \beta \rangle} \mathfrak{e}_{\beta}.$$ Here $\mathrm{Mp}_2(\mathbb{Z})$ is the metaplectic group of pairs $(M, \phi)$ where $M = \begin{psmallmatrix} a & b \\ c & d \end{psmallmatrix} \in \mathrm{SL}_2(\mathbb{Z})$ and $\phi$ is a square root of $c\tau + d$, and $\mathfrak{e}_{\gamma}$, $\gamma \in L'/L$ is the standard basis of the group ring $\mathbb{C}[L'/L]$.\\
(ii) A \emph{nearly-holomorphic vector-valued modular form} for $\rho_L$ is a holomorphic function $F : \mathbb{H} \rightarrow \mathbb{C}[L'/L]$ which satisfies $$F((M, \phi) \cdot \tau) = \phi(\tau)^{2k} \rho_L(M) F(\tau), \; \; (M, \phi) \in \mathrm{Mp}_2(\mathbb{Z})$$ and which is meromorphic at the cusp $\infty$, i.e. its Fourier series has only finitely many negative exponents. \\
(iii) Let $k \ge 1$ and let $F \in M_{\kappa}^!(\rho_L)$ be a nearly-holomorphic modular form. The \textbf{(singular) theta lift} of $F$ is $$\Phi_F(Z) = \int^{\mathrm{reg}}_{\mathrm{SL}_2(\mathbb{Z}) \backslash \mathbb{H}} \langle F(\tau), \Theta_k(\tau; Z) \rangle y^{\kappa} \frac{\mathrm{d}x \, \mathrm{d}y}{y^2}.$$
\end{defn}

Here the regularization means one takes the limit as $w \rightarrow \infty$ of the integral over $\mathcal{F}_w = \{\tau = x+iy \in \mathbb{H}: \; x^2 + y^2 \ge 1, \; |x| \le 1/2, \; y \le w\}$; in effect, it means one integrates first with respect to $x$, which mollifies the contribution of the principal part of $F$ to the integral; and then secondly with respect to $y$. The behavior of the theta lift under M\"obius transformations is \begin{align*} \Phi_F(M \cdot Z) &= \int^{\mathrm{reg}} \langle F(\tau), \Theta_k(\tau; M \cdot Z) \rangle \; y^{\kappa - 2} \, \mathrm{d}x \, \mathrm{d}y \\ &= \mathrm{det}(cZ + d)^k \int^{\mathrm{reg}} \sum_{\gamma \in L'/L} F_{\gamma}(\tau) \overline{\Theta_{k; M^{-T} \gamma M^{-1}}(\tau; Z)} \, y^{\kappa - 2} \, \mathrm{d}x \, \mathrm{d}y \\ &= \mathrm{det}(cZ+d)^k \Phi_{\sigma(M)^{-1} F}(Z). \end{align*} 
Therefore, the singular theta lift $\Phi_F$ defines a meromorphic Siegel modular form of weight $k$ on the subgroup of $\Gamma_0^{(2)}(N)$ that fixes $F$, with singularities of multiplicity $k$ along Humbert surfaces associated to the principal part of the input $F$. This is the so-called Borcherds additive lift. We refer to \cite[Theorem 14.3]{B} for more details. Since the Borcherds additive lift is a generalization of the Gritsenko lift \cite{G88}, we also call it the singular Gritsenko lift.  When the input $F$ has weight $\kappa=-\frac{1}{2}$ (i.e. $k=0$), the modified exponential of $\Phi_F$ defines a remarkable modular form which has an infinite product expansion (cf. \cite[Theorem 13.3]{B}), called a \emph{Borcherds product}, or more specifically the Borcherds lift of $F$. In this paper we will need both types of singular theta lifts.

\begin{rem} It is often useful to consider the \emph{pullback} or restriction of a Siegel modular form to a Humbert surface, the result being traditionally interpreted as a Hilbert modular form attached to a real-quadratic field. From the point of view of orthogonal modular forms this is very simple: to restrict a form $\Phi(Z)$ to the sublattice $v^{\perp}$ (with $v \in L'$) one simply restricts to $Z$ satisfying $\langle Z, v \rangle = 0$.

The pullback of a theta lift $\Phi_F$ as above is again a theta lift, $\Phi_{\vartheta F}$, where $\vartheta F \in M_{\kappa + 1/2}(\rho_{v^{\perp}})$ is the \emph{theta contraction}, obtained roughly by tensoring $F$ with a unary theta series and averaging out. The important point is that one can check rigorously whether a theta lift $\Phi_F$ vanishes identically on a Heegner divisor, with the computations taking place only on the level of vector-valued modular forms.
\end{rem}

\section{The ring of meromorphic Siegel modular forms of level 5}\label{sec:meromorphic}
We consider the ring $M_*^!(\Gamma_0^{(2)}(5))$ of meromorphic Siegel modular forms of level $\Gamma_0^{(2)}(5)$ whose poles may lie only on the orbit $\mathcal{H}$ of the Humbert surface $$\{Z = \begin{psmallmatrix} \tau & z \\ z & w \end{psmallmatrix} \in \mathbb{H}_2: \; \mathrm{det}(Z) = 1 - 5z\},$$ which is a locus of principally polarized RM abelian surfaces with $\Gamma_0^{(2)}(5)$ level structure. In view of our discussion earlier, $\mathcal{H}$ splits as the union of two irreducible $\Gamma_{0, 1}^{(2)}(5)$-orbits of Humbert surfaces: $$\mathcal{H} = H(1/20, \pm \gamma_1) + H(1/20, \pm \gamma_2),$$ each invariant under the discriminant kernel of $L = U\oplus U(5)\oplus A_1$, where we have fixed any coset $\gamma_1 \in L'/L$ of norm $1/20 + \mathbb{Z}$ and define $\gamma_2 = \varepsilon_2(\gamma_1)$. The Humbert surface $H_{1/5}$ of discriminant $1/5$, the orbit of $\{\begin{psmallmatrix} \tau & z \\ z & w \end{psmallmatrix} \in \mathbb{H}_2: \; \tau = 2z\}$ under $\Gamma_0^{(2)}(5)$, also splits into two $\Gamma_{0, 1}^{(2)}(5)$-invariant divisors: $$H_{1/5} = H(1/5, \pm \delta_1) + H(1/5, \pm \delta_2),$$ where $\delta_n = 2\gamma_n \in L'/L$.

For a finite-index subgroup $\Gamma \le \Gamma_0^{(2)}(5)$ or $\Gamma \le \mathrm{O}(L)$, we define $M_*^!(\Gamma,\chi)$ to be the ring of meromorphic forms, holomorphic away from $\mathcal{H}$, which are modular under $\Gamma$ with character $\chi$.

We first prove a form of Koecher's principle for meromorphic modular forms with poles supported on $\mathcal{H}$.  

\begin{lem}\label{lem:Koecher}
Let $f\in M_k^!(\Gamma_{0,1}^{(2)}(5),\chi)$. If $k$ is negative, then $f$ is identically zero. If $k=0$, then $f$ is constant.
\end{lem}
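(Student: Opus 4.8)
The plan is to transport the statement to orthogonal modular forms for $L = U \oplus U(5) \oplus A_1$, where $\Gamma_{0,1}^{(2)}(5)/\{\pm I\} = \widetilde{\mathrm{SO}}(L)$, and to prove it as a Koecher principle \emph{localized at a $0$-dimensional cusp that the pole divisor $\mathcal{H}$ avoids}. The reason for localizing is that the classical proof of the weight bound compares $\det(\mathrm{Im}\,Z)^{k/2}|f(Z)|$, which is $\Gamma$-invariant, against the maximum principle; this breaks down immediately in the presence of poles, since the invariant quantity blows up along $\mathcal{H}$. The entire subtlety therefore sits in controlling the poles, and the strategy is to push as much of the argument as possible into a region where $f$ is genuinely holomorphic.

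The crux is to show that $\mathcal{H}$ does not accumulate at the cusp corresponding to $Z \to i\infty$. In the embedding $\phi$ this cusp is the isotropic line $\ell$ spanned by the matrix with a single nonzero $(3,4)$-entry, which lies in the $U(5)$-summand. A Humbert surface $H(D,\lambda)$ passes through $\ell$ exactly when some $\lambda$ in the given coset with $Q(\lambda)=D$ is orthogonal to $\ell$, i.e. has vanishing leading coefficient $a$; but a vector with $a=0$ has trivial $U(5)$-discriminant in its first coordinate, whereas the cosets $\pm\gamma_1,\pm\gamma_2$ have \emph{nonzero} image in the discriminant group of the $U(5)$-summand. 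Hence every $\lambda$ contributing to $\mathcal{H}$ has $a\neq 0$, so that $\det(\mathrm{Im}\,Z)$ is bounded on $\mathcal{H}$; by local finiteness of the divisor a full neighborhood $\{\det(\mathrm{Im}\,Z)>C\}$ of the cusp is disjoint from $\mathcal{H}$. Consequently $f$ is holomorphic there, and since the integral translations $Z\mapsto Z+B$ and the Levi $\mathrm{SL}_2(\mathbb{Z})\subset\Gamma_{0,1}^{(2)}(5)$ both fix this cusp, $f$ has an honest Fourier expansion $f(Z)=\sum_T a(T)e^{2\pi i\,\mathrm{tr}(TZ)}$ with the usual Koecher positivity $a(T)=0$ unless $T\geq 0$. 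In particular $f$ is bounded as $\mathrm{Im}(Z)\to\infty$, with limit $a(0)$.

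Next I would apply the Siegel $\Phi$-operator along this cusp. Because only the terms with $T=\mathrm{diag}(m,0)$ survive, $\Phi f(\tau)=\sum_{m\geq 0} a(\mathrm{diag}(m,0))\,e^{2\pi i m\tau}$ is a genuine \emph{holomorphic} elliptic modular form of weight $k$ for $\mathrm{SL}_2(\mathbb{Z})$; it is therefore identically zero for $k<0$ and constant for $k=0$. Using the $\mathrm{SL}_2(\mathbb{Z})$-equivariance of the coefficients $a(T)$, this forces $a(T)=0$ for every $T\geq 0$ of rank $\leq 1$, so that $f$ becomes cuspidal at the cusp $\ell$ — after subtracting the constant $a(0)$ in the case $k=0$.

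The remaining step, which I expect to be the main obstacle, is to conclude that this cuspidal remainder vanishes despite its poles along $\mathcal{H}$. Writing $\mathrm{div}(f)=Z_f-p\,\overline{\mathcal{H}}$ with $Z_f$ effective (and supported at the cusps when $f$ is cuspidal), modularity of weight $k$ gives $Z_f \sim k\mathcal{L}+p\,\overline{\mathcal{H}}$ on the Baily--Borel compactification, where $\mathcal{L}$ is the automorphic (weight $1$) line bundle. For $k<0$ a nonzero $f$ would make $k\mathcal{L}+p\,\overline{\mathcal{H}}$ effective, and for $k=0$ it would force a nontrivial linear equivalence $Z_f\sim p\,\overline{\mathcal{H}}$; both are excluded precisely when $\overline{\mathcal{H}}$ is \emph{rigid} and not big, i.e. $k\mathcal{L}+p\,\overline{\mathcal{H}}$ is non-effective for all $k<0$ and $\overline{\mathcal{H}}$ moves in no pencil. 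I would establish this from the arithmetic of the cosets $\gamma_1,\gamma_2$ and Borcherds' obstruction theory — showing that no holomorphic modular form of non-positive weight can vanish exactly along $\mathcal{H}$, the sign being governed by the constant term of the relevant Eisenstein series — after which analytic continuation from the cusp neighborhood finishes the proof. This non-bigness of $\overline{\mathcal{H}}$ is where I expect to invest the most effort.
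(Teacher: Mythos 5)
Your opening moves are essentially correct: every $\lambda$ supporting $\mathcal{H}$ does have $a\not\equiv 0\ (5)$, so $\mathcal{H}$ misses the cusp $\ell=\mathbb{Q}e_f$, and the avoidance is even uniform --- on $\lambda^{\perp}\cap\mathbb{H}_2$ one gets $\det(\mathrm{Im}\,Z)\le Q(\lambda)/\langle\lambda,\ell\rangle^2\le 1/20$ by projecting $\lambda$ and $e_f$ onto the positive-definite complement of the plane attached to $Z$ and applying Cauchy--Schwarz. (Local finiteness alone, which is what you cite, would not exclude infinitely many hyperplanes creeping into every horoball; you need this uniform bound.) So $f$ is holomorphic on $\{\det\mathrm{Im}\,Z>1/20\}$, has a Fourier expansion there, and Koecher positivity of the coefficients follows from the Levi action as you say. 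The proposal breaks down after that, in two places.

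First, the $\Phi$-operator step fails as stated. The copy of $\mathrm{SL}_2$ acting on $\tau$ by M\"obius transformations (embedded block-wise, fixing $w$) meets $\Gamma_{0,1}^{(2)}(5)$ only in $\Gamma_1(5)$, not $\mathrm{SL}_2(\mathbb{Z})$: its lower-left entry must be $\equiv 0$ and its upper-left entry $\equiv 1$ mod $5$. (The Levi you invoke acts by $Z\mapsto U^TZU$ and gives the equivariance $a(UTU^T)=a(T)$, but says nothing about modularity of $\Phi f$ in $\tau$.) For $\Gamma_1(5)$ the inference ``holomorphic on $\mathbb{H}$, holomorphic at $\infty$, weight $\le 0$, hence constant'' is false: $(\eta(5\tau)/\eta(\tau))^6$ is a nonconstant weight-$0$ form for $\Gamma_0(5)$, holomorphic on $\mathbb{H}$ and vanishing at $\infty$, with a pole at the cusp $0$. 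To rule this out you would need control of $\Phi f$ at the remaining cusps, and that is exactly what your localization cannot give: the boundary curve carrying $\Phi f$ is attached to the isotropic plane spanned by $e_f$ and $e_c$ (the hyperbolic plane in the $(c,d)$-coordinates), and $\mathcal{H}$ passes through every cusp of that curve other than $\mathbb{Q}e_f$. Concretely, the norm-$1/20$ vectors $(a,b,c,d,f)=(1,\,1/2,\,c,\,0,\,-1/5)\in\gamma_1$, $c\in\mathbb{Z}$, are orthogonal to $e_c$, so components of $\mathcal{H}$ enter every neighborhood of that cusp. Your discriminant-group argument constrains only the $U(5)$-coordinates of $\lambda$, not the $U$-coordinates $(c,d)$; this asymmetry is fatal for any argument that tries to see everything from cusps.

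Second, the concluding ``rigidity of $\overline{\mathcal{H}}$'' step is a plan rather than a proof, and as formulated it is circular: the statement you propose to establish (``no holomorphic modular form of non-positive weight vanishes exactly along $\mathcal{H}$'') is vacuous, since the ordinary Koecher principle already forbids nonzero holomorphic forms of negative weight; what must be excluded is a \emph{meromorphic} form of weight $\le 0$ with poles on $\mathcal{H}$ --- which is the lemma itself. (Rigidity also needs care: the paper later produces weight-one forms $f_1,f_2$ whose divisors contain components of $\mathcal{H}$ with both signs, so classes supported on the arrangement do satisfy nontrivial relations with the modular line bundle.) The idea you are missing --- and the whole of the paper's proof --- is to localize at the polar divisor rather than at a cusp. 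If $f$ has a pole of exact order $c_v>0$ along a component $v^{\perp}$ of $\mathcal{H}$, take the quasi-pullback (leading Taylor term) of $f$ along $v^{\perp}$. Distinct components of $\mathcal{H}$ are disjoint inside $\mathbb{H}_2$: for $u\ne\pm v$ of norm $1/20$ in the cosets $\pm\gamma_1,\pm\gamma_2$ the pairing $\langle u,v\rangle$ lies in $\pm\tfrac1{10}+\mathbb{Z}$ or $\pm\tfrac12+\mathbb{Z}$, so the projection of $u$ to $v^{\perp}$ has norm $\tfrac1{20}-5\langle u,v\rangle^2\le 0$ and $u^{\perp}\cap v^{\perp}\cap\mathbb{H}_2=\emptyset$. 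Hence the quasi-pullback is a \emph{nonzero holomorphic} modular form of weight $k-c_v$ on $v^{\perp}\cap\mathbb{H}_2$, a Hilbert modular surface for $U\oplus\mathbb{Z}[(1+\sqrt{5})/2]$; that rational quadratic space contains no isotropic planes, so Koecher's principle holds there, forcing $k-c_v\ge 0$, hence $k\ge 0$. If $k=0$ this forces $c_v=0$ for all components, so $f$ is holomorphic and then constant by Koecher on $\widetilde{\mathrm{O}}(L)$. This single restriction argument replaces both of your problematic steps.
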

\begin{proof}
We prove the lemma in the context of $\mathrm{O}(3, 2)$. Let $v$ and $u\neq \pm v$ be primitive vectors of norm $1/20$ in $L'$, such that $v^\perp, u^\perp \in \mathcal{H}$. Suppose that $f$ is not identically zero and has poles of multiplicity $c_v$ along $v^\perp$.  We denote the intersection of $v^\perp$ and the symmetric domain $\mathbb{H}_2$ (resp. the lattice $L$) by $v^\perp \cap \mathbb{H}_2$ (resp. $L_v$). Then $L_v$ is a lattice of signature $(2,2)$ and discriminant $5$, equivalent to the lattice $U + \mathbb{Z}[(1 + \sqrt{5}) / 2]$ where the quadratic form is the field norm. It is easy to see that the space $L_v\otimes \mathbb{Q}$ contains no isotropic planes, so the Koecher principle holds for modular forms on $\widetilde{\mathrm{O}}(L_v)$. We find that the projection of $u$ in $L_v$ has non-positive norm, which implies that the intersection of $u^\perp$ and $v^\perp \cap \mathbb{H}_2\cong \mathbb{H}\times \mathbb{H}$ is empty. Thus the quasi-pullback of $f$ to $v^\perp \cap \mathbb{H}_2$, i.e. the leading term in the power series expansion about that hyperplane, is a nonzero holomorphic modular form of weight $k-c_v$. By Koecher's principle we conclude $k-c_v\geq 0$ and therefore $k \ge 0$, and when $k=0$, we must have $c_v=0$ and thus $f$ is holomorphic and must be constant (by Koecher's principle on $\widetilde{\mathrm{O}}(L)$).
\end{proof}

We now construct some basic modular forms using Borcherds additive lifts (singular Gritsenko lifts) and Borcherds products. 

\begin{lem}\label{wt1} There are singular Gritsenko lifts $f_1, f_2$ of weight one on $\widetilde{\mathrm{O}}(L)$ whose divisors are exactly $$\mathrm{div}(f_1) = -H(1/20, \pm \gamma_1) + 4H(1/20, \pm \gamma_2) + H(1/5, \pm \delta_1)$$ and $$\mathrm{div}(f_2) = 4H(1/20, \pm \gamma_1) - H(1/20, \pm \gamma_2) + H(1/5, \pm \delta_2).$$
\end{lem}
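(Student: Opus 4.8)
The plan is to produce $f_1$ and $f_2$ as singular Gritsenko lifts $\Phi_{F_1}, \Phi_{F_2}$ of nearly-holomorphic vector-valued modular forms $F_1, F_2 \in M_{1/2}^!(\rho_L)$; by the discussion of \S\ref{sec:pre} such a lift is automatically a meromorphic modular form of weight $1$ on $\widetilde{\mathrm{O}}(L)$ (the discriminant kernel fixes every $F$), so the entire content of the lemma is the computation of the two divisors. For a primitive $\lambda \in L'$ of norm $D>0$ lying in the coset $\gamma$, the multiplicity of $\Phi_F$ along the Humbert surface $H(D,\gamma)$ is governed by the principal part of $F$ via Borcherds' formula \cite[Theorem 14.3]{B},
\[
\operatorname{mult}_{H(D,\gamma)}(\Phi_F) = \sum_{n\ge 1} c_F(n\gamma, -n^2 D),
\]
where $c_F(\beta,m)$ is the coefficient of $q^m \mathfrak{e}_\beta$ in $F$ and the sign of the coefficient decides whether the contribution is a pole or a zero. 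Thus the task is to choose the principal parts of $F_1,F_2$ so that these sums reproduce the two prescribed divisors, and then to confirm rigorously that no further zeros or poles occur. By the symmetry $\varepsilon_2$, which interchanges $\gamma_1 \leftrightarrow \gamma_2$ and $\delta_1 \leftrightarrow \delta_2$, it suffices to treat $f_1$ and then set $f_2 = f_1 \circ \varepsilon_2$.

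First I would translate the target divisor of $f_1$ into a principal part. Because the sum runs over all positive multiples $n\gamma$, the $n=2$ term attached to a coset of norm $1/20$ lands on the coset $2\gamma = \delta$ of norm $1/5$; hence the conditions couple the $q^{-1/20}$-coefficients to the $q^{-1/5}$-coefficients, and the resulting small linear system is genuinely triangular rather than diagonal. Solving it fixes the principal part of $F_1$ uniquely (up to $\mathfrak{e}_\beta = \mathfrak{e}_{-\beta}$), supported only at $q^{-1/20}\mathfrak{e}_{\pm\gamma_1}$, $q^{-1/20}\mathfrak{e}_{\pm\gamma_2}$ and $q^{-1/5}\mathfrak{e}_{\pm\delta_1}$; confining the support to these cosets is exactly what guarantees that no Humbert surfaces outside $\mathcal{H}$ and $H_{1/5}$ can enter the divisor.

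Next I would show that an input $F_1$ with precisely this principal part exists. Using the splitting $\rho_L \cong \rho_{U(5)} \otimes \rho_{A_1}$, one builds candidates by tensoring the weight-$1/2$ unary theta series attached to $A_1$ with weakly holomorphic weight-$0$ vector-valued forms for $\rho_{U(5)}$ — that is, with modular functions of level $5$ having prescribed poles at the cusps — and isolating the components in the cosets above. Alternatively one invokes the existence criterion for nearly-holomorphic forms: a prescribed principal part is realizable if and only if it pairs to zero with every cusp form in the dual space $S_{3/2}(\rho_L^{\ast})$, so that the construction reduces to computing $S_{3/2}(\rho_L^{\ast})$ and verifying finitely many linear relations. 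I would then check that $\Phi_{F_1}$ is not identically zero, for instance by exhibiting a nonvanishing Fourier–Jacobi coefficient of its holomorphic part, and verify that the divisor is exactly the stated one using the theta-contraction of the preceding remark: restricting $\Phi_{F_1}$ to each of $H(1/20,\pm\gamma_2)$ and $H(1/5,\pm\delta_1)$ reduces the vanishing (and the order of vanishing) to an identity among vector-valued modular forms for the signature-$(2,2)$ sublattice, which can be tested exactly and confirms the absence of spurious zeros.

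The main obstacle is the construction and control of the half-integral weight inputs: weight $1/2$ is so small that $M_{1/2}^!(\rho_L)$ is rigid, and I expect the delicate step to be producing a form with exactly the required principal part and \emph{no} extraneous polar terms, together with the exact determination of the orders of the zeros — in particular the order $4$ along $H(1/20,\pm\gamma_2)$, where an incorrect multiplicity would break the clean divisor identity. Pinning these down will likely require either the higher theta-contractions or a bootstrapping argument that compares suitable monomials in $f_1, f_2$ against Borcherds products of already-known divisor, using that a holomorphic, nowhere-vanishing weight-zero form is constant.
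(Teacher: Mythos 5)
Your argument rests on a divisor formula that is not valid for the singular additive (Gritsenko) lift. The identity $\mathrm{mult}_{H(D,\gamma)}(\Phi_F)=\sum_{n\ge 1}c_F(n\gamma,-n^2D)$, with the sign of the coefficient deciding between pole and zero, is the divisor formula for the \emph{multiplicative} Borcherds lift of a weight $-1/2$ input (Theorem 13.3 of Borcherds); for the additive lift of a weight $1/2$ input, Theorem 14.3 only says that every nonzero principal-part coefficient produces a \emph{singularity} (here a pole of order $k=1$) along the corresponding Humbert surface, and it gives no information whatsoever about zeros. Consequently your translation step fails at the start: an input $F_1$ whose principal part is supported at $q^{-1/20}\mathfrak{e}_{\pm\gamma_2}$ and $q^{-1/5}\mathfrak{e}_{\pm\delta_1}$ would give $\Phi_{F_1}$ \emph{poles} along $H(1/20,\pm\gamma_2)$ and $H(1/5,\pm\delta_1)$, not the required zeros of orders $4$ and $1$; there is no choice of principal part that forces those zeros. (Note also that weight-one lifts here require antisymmetric inputs: the form lifting to $f_1$ has expansion $2q^{-1/20}(\mathfrak{e}_{\gamma_1}-\mathfrak{e}_{-\gamma_1})+O(q^{1/20})$, so $c(-\gamma,m)=-c(\gamma,m)$, and your multiplicity formula is not even well defined against the symmetry $H(D,\gamma)=H(D,-\gamma)$.)

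The genuine content of the lemma is exactly what you defer to your final paragraph as "the main obstacle.'' The paper's route is: take the additive lift $f_1$ of the antisymmetric weight $1/2$ form above, which is known only to have at worst simple poles, and only along $H(1/20,\pm\gamma_1)$; separately construct a weight-one Borcherds \emph{product} $F_1$ from a weight $-1/2$ input with principal part $2\mathfrak{e}_0-2q^{-1/20}(\mathfrak{e}_{\gamma_1}+\mathfrak{e}_{-\gamma_1})+4q^{-1/20}(\mathfrak{e}_{\gamma_2}+\mathfrak{e}_{-\gamma_2})+q^{-1/5}(\mathfrak{e}_{\delta_1}+\mathfrak{e}_{-\delta_1})$, whose divisor is exactly the claimed one; show by theta contraction that $f_1$ vanishes on $H(1/5,\pm\delta_1)$, so that $f_1/F_1$ has poles supported on $\mathcal{H}$ only; and conclude by Lemma \ref{lem:Koecher} that $f_1/F_1$ is constant. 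Your closing appeal to "a holomorphic, nowhere-vanishing weight-zero form is constant'' is too weak for this last step: before the divisors are matched one does not know $\mathrm{div}(f_1)\ge\mathrm{div}(F_1)$, so the quotient is only known to be meromorphic with poles on $\mathcal{H}$ (a priori of order up to $4$ along $H(1/20,\pm\gamma_2)$), and the meromorphic Koecher principle of Lemma \ref{lem:Koecher} is precisely the tool that closes this gap. Your theta-contraction idea is on target for the vanishing along $H(1/5,\pm\delta_1)$, but contractions alone do not pin down the multiplicity $4$; only the comparison with the Borcherds product does.
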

\begin{proof} Using the algorithm of \cite{W} we find a nearly-holomorphic modular form of weight $1/2$ for the Weil representation associated to $L$ whose Fourier expansion takes the form $$2 q^{-1/20}(\mathfrak{e}_{\gamma_1} - \mathfrak{e}_{-\gamma_1}) + O(q^{1/20}),$$ which is mapped under the Gritsenko lift to a meromorphic form $f_1$ with simple poles only on $H(1/20, \pm \gamma_1)$ and $H(1/20, \pm \gamma_4)$. Applying the automorphism $\varepsilon_2$ on $L'/L$ to the input into $f_1$ yields the input into $f_2$.

On the other hand, we found a nearly-holomorphic modular form of weight $-1/2$ whose principal part at $\infty$ is $$2 \mathfrak{e}_0 - 2q^{-1/20} (\mathfrak{e}_{\gamma_1} + \mathfrak{e}_{-\gamma_1}) + 4q^{-1/20} (\mathfrak{e}_{\gamma_2} + \mathfrak{e}_{-\gamma_2}) + q^{-1/5} (\mathfrak{e}_{\delta_1} + \mathfrak{e}_{-\delta_1}),$$ which is mapped under the Borcherds lift to a meromorphic modular form $F_1$ (possibly with character) of weight one and the claimed divisor. By taking theta contractions of the input form one finds that $f_1$ vanishes on $H(1/5, \pm \delta_1)$. Then the quotient $f_1 / F_1$ lies in $M_0^!(\widetilde{\mathrm{O}}(L), \chi)$ so it is constant by Lemma \ref{lem:Koecher}.
\end{proof}

\begin{rem} The Fourier expansions of $f_1$ and $f_2$ begin 
\begin{align*}
f_1 \left( \begin{psmallmatrix} \tau & z \\ z & w \end{psmallmatrix}\right) &= 1 + 3q + 3s + 4q^2 + (2r^{-1} + 6 + 2r) qs + 4s^2 + O(q,s)^3;\\    
f_2 \left( \begin{psmallmatrix} \tau & z \\ z & w \end{psmallmatrix} \right) &= q - s - 2q^2 + 2s^2 + 4q^3 + (4r^2 + 2 + 4r) qs (q - s) - 4s^3 + O(q, s)^4;
\end{align*}
where as usual $q = e^{2\pi i \tau}$, $r = e^{2\pi i z}$, $s = e^{2\pi i w}$. For more coefficients see Figure \ref{tab:coeffs} below. Setting $s = 0$ one obtains the (holomorphic) modular forms $$\Phi(f_1) = 1 + 3q + 4q^2 \pm ..., \; \Phi(f_2) = q - 2q^2 + 4q^3 \pm ...$$ of weight one and level $\Gamma_1(5)$ which freely generate the ring $M_*(\Gamma_1(5))$.
\end{rem}

There are nine Heegner divisors of discriminant $1/4$. One is the mirror of the reflective vector $r = 1/2 \in A_1'$, represented by the diagonal in $\mathbb{H}_2$, and the other eight are of the form $H(1/4, r + \gamma)$ where $\gamma$ are the isotropic cosets of $U(5)'/U(5)$. It will be convenient to fix concrete representatives. We take the Gram matrix $\mathbf{S} = \begin{psmallmatrix} 0 & 0 & 5 \\ 0 & 2 & 0 \\ 5 & 0 & 0 \end{psmallmatrix}$ for $U(5) \oplus A_1$, such that $L'/L \cong \mathbf{S}^{-1} \mathbb{Z}^3 / \mathbb{Z}^3$ and fix the cosets \begin{align*} &\gamma_1 = (1/5, 1/2, 4/5) + L&  &\gamma_2 = (2/5, 1/2, 2/5) + L& \\ &\gamma_3 = (3/5, 1/2, 3/5) + L&  &\gamma_4 = (4/5, 1/2, 1/5) + L&\end{align*} of norm $1/20 + \mathbb{Z}$. The norm $1/4$ cosets other than $r$ are labelled $$\alpha_n = (n/5, 1/2, 0) + L, \; \beta_n = (0, 1/2, n/5) + L, \; n \in \{1, 2, 3, 4\}.$$

\begin{lem} There are singular Gritsenko lifts $g_1, g_2, h_1, h_2$ of weight two on $\widetilde{\mathrm{O}}(L)$ whose divisors are exactly \begin{align*} \mathrm{div}\, g_1 &= 3 H(1/20, \pm \gamma_1) - 2 H(1/20, \pm \gamma_2) + H(1/4, \pm \alpha_2); \\ \mathrm{div}\, g_2 &= -2H(1/20, \pm \gamma_1) + 3H(1/20, \pm \gamma_2) + H(1/4, \pm \alpha_1); \\ \mathrm{div}\, h_1 &= 3 H(1/20, \pm \gamma_1) - 2 H(1/20, \pm \gamma_2)+ H(1/4, \pm \beta_2); \\ \mathrm{div}\, h_2 &= -2H(1/20, \pm \gamma_1) + 3H(1/20, \pm \gamma_2) + H(1/4, \pm \beta_1). \end{align*}
\end{lem}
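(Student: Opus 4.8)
The plan is to imitate the proof of Lemma \ref{wt1}: construct each form first as a singular Gritsenko lift, then determine its complete divisor by comparing it against a Borcherds product and invoking Lemma \ref{lem:Koecher}. Since the four forms are permuted by automorphisms, only one genuine lift computation is needed.

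First I would use the algorithm of \cite{W} to produce a nearly-holomorphic modular form of weight $3/2 = k - 1/2$ (here $k = 2$) for $\rho_L$ whose principal part is the single symmetric term $q^{-1/20}(\mathfrak{e}_{\gamma_2} + \mathfrak{e}_{-\gamma_2})$; the symmetric combination is forced because $k = 2$ is even, in contrast to the antisymmetric input used for the odd-weight $f_1$. Its singular Gritsenko lift is a weight-two meromorphic form $g_1$ with a pole of multiplicity $k = 2$ along $H(1/20, \pm \gamma_2)$ and no other poles. I would then recover the remaining three forms without further lifting: the automorphism $\varepsilon_2$ sends $\gamma_1 \mapsto \gamma_2 \mapsto -\gamma_1$ and $\alpha_1 \mapsto \alpha_2$, so it carries $g_2$ to a constant multiple of $g_1$; and the isometry of $U(5) \oplus A_1$ exchanging the two isotropic generators, $(x_1, x_2, x_3) \mapsto (x_3, x_2, x_1)$, fixes both $H(1/20, \pm \gamma_1)$ and $H(1/20, \pm \gamma_2)$ while swapping $\alpha_n \leftrightarrow \beta_n$, hence takes $g_1, g_2$ to $h_1, h_2$.

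Next I would realize the full claimed divisor as a Borcherds product. Using \cite{W} I would find a nearly-holomorphic form of weight $-1/2$ for $\rho_L$ whose principal part encodes the three components of $\mathrm{div}\, g_1$, namely $3\, q^{-1/20}(\mathfrak{e}_{\gamma_1} + \mathfrak{e}_{-\gamma_1}) - 2\, q^{-1/20}(\mathfrak{e}_{\gamma_2} + \mathfrak{e}_{-\gamma_2}) + q^{-1/4}(\mathfrak{e}_{\alpha_2} + \mathfrak{e}_{-\alpha_2})$, together with a constant term $4 \mathfrak{e}_0$ fixing the weight of the product to be two. Its Borcherds lift $G_1$ is then a weight-two meromorphic form (possibly with a character) having exactly the divisor asserted for $g_1$. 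The existence of this input is subject to the usual obstruction conditions, i.e.\ it must pair to zero against every cusp form of dual weight for $\rho_L$, which amount to finitely many linear conditions that I would verify directly.

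To conclude, I would match $g_1$ with $G_1$. Since the only pole of $g_1$ lies along $H(1/20, \pm \gamma_2) \subset \mathcal{H}$, and the only zero of $G_1$ lying off $\mathcal{H}$ is $H(1/4, \pm \alpha_2)$, the quotient $g_1 / G_1$ is holomorphic away from $\mathcal{H}$ as soon as $g_1$ vanishes along $H(1/4, \pm \alpha_2)$. I would establish this vanishing rigorously by the theta-contraction method used in the proof of Lemma \ref{wt1}: contracting the weight-$3/2$ input against $\alpha_2$ and checking that the resulting vector-valued form is identically zero, exactly as $f_1$ was shown there to vanish on $H(1/5, \pm \delta_1)$. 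Then $g_1 / G_1 \in M_0^!(\widetilde{\mathrm{O}}(L), \chi)$ is constant by Lemma \ref{lem:Koecher}, so $\mathrm{div}\, g_1 = \mathrm{div}\, G_1$; the divisors of $g_2, h_1, h_2$ follow by transport under $\varepsilon_2$ and the isotropic swap. The main obstacle is this last divisor verification: producing the weight-$(-1/2)$ input (checking that the obstruction space vanishes on the prescribed principal part) and carrying out the theta-contraction computation certifying that the Gritsenko lift vanishes on $H(1/4, \pm \alpha_2)$ to first order, so that Koecher forces an exact match of divisors rather than merely an inequality.
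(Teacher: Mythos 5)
Your plan follows the same route as the paper's proof: realize each form as a singular Gritsenko lift, build a weight-two Borcherds product carrying the claimed divisor, certify vanishing along the discriminant-$1/4$ divisor by theta contraction, and let Lemma \ref{lem:Koecher} force the quotient to be constant, hence an exact equality of divisors. Your symmetry reduction is also sound: $\varepsilon_2$ and the coordinate swap of $U(5)\oplus A_1$ normalize $\widetilde{\mathrm{O}}(L)$ and act on the cosets $\gamma_n, \alpha_n, \beta_n$ exactly as you say, so in principle one lift suffices. However, there is a genuine gap in how you produce that one lift: you specify the weight-$3/2$ input only by its principal part $q^{-1/20}(\mathfrak{e}_{\gamma_2}+\mathfrak{e}_{-\gamma_2})$, and this does not determine it. A nearly-holomorphic form with prescribed principal part is unique only modulo holomorphic forms of weight $3/2$ for $\rho_L$, and that space is nonzero here. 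Consequently ``the'' singular Gritsenko lift with a double pole on $H(1/20,\pm\gamma_2)$ is an affine family of weight-two forms whose divisors away from the pole are different.

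The paper's own data makes this concrete: the inputs $G_1$ (lifting to $g_1$) and $H_1$ (lifting to $h_1$) have the \emph{same} principal part $q^{-1/20}(\mathfrak{e}_{\gamma_2}+\mathfrak{e}_{\gamma_3})$, which is exactly your prescribed one since $\gamma_3=-\gamma_2$, but different nonzero constant terms; their lifts differ by the nonzero holomorphic form $g_1-h_1=f_1f_2$, and one vanishes on $H(1/4,\pm\alpha_2)$, the other on $H(1/4,\pm\beta_2)$, while a generic member of the family (e.g. $g_1+tf_1f_2$ with $t\neq 0,-1$) vanishes on neither. So if you feed the algorithm of \cite{W} an arbitrary form with your principal part, the theta-contraction computation you plan as a final verification will in general return a nonzero form and the argument stops. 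The repair is precisely the paper's formulation of the pullback trick: compute the full four-dimensional space of nearly-holomorphic forms of weight $3/2$ with pole order at most $1/20$, and impose vanishing of the theta contraction along $\alpha_2^{\perp}$ as a \emph{linear selection criterion} on the input, checking that a solution exists and is unique up to scalar (uniqueness following from the absence of weight-$3/2$ cusp forms for $\rho_L$). This subtlety is invisible in your model, Lemma \ref{wt1}, where the antisymmetric weight-$1/2$ input has no holomorphic ambiguity, so the contraction there really is just a check; in weight two it must be built into the construction. The remainder of your argument (the Borcherds product with constant term $4\mathfrak{e}_0$, the obstruction check, the Koecher quotient, and transport by the two symmetries) matches the paper and is fine.
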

\begin{proof} The proof is essentially the same argument as Lemma \ref{wt1}. Using the pullback trick, one constructs weight two Gritsenko lifts which vanish on the claimed discriminant $1/4$ Heegner divisors. Then one constructs Borcherds products of weight two with the claimed divisors. The respective quotients lie in $M_0^!(\widetilde{\mathrm{O}}(L), \chi)$ and are therefore constant by Lemma \ref{lem:Koecher}. To determine the precise (nearly-holomorphic) vector-valued modular forms which lift to $g_1, g_2, h_1, h_2$, one only needs to compute the four-dimensional space of nearly-holomorphic forms of weight $3/2$ for $\rho_L$ with a pole of order at most $1/20$ at $\infty$, and identify the unique (up to scalar) forms whose pullback to $\alpha_n^{\perp}$ or $\beta_n^{\perp}$ is respectively zero. The input forms $G_1, G_2, H_1, H_2$ can be chosen such that their Fourier expansions begin as follows: \begin{align*} G_1: &\quad q^{-1/20} (\mathfrak{e}_{\gamma_2} + \mathfrak{e}_{\gamma_3}) +  (\mathfrak{e}_{(0, 0, 1/5)} + \mathfrak{e}_{(0, 0, 4/5)} - \mathfrak{e}_{(0, 0, 2/5)} + \mathfrak{e}_{(0, 0, 3/5)} ) + O(q^{1/20}) \\ G_2: &\quad q^{-1/20} (\mathfrak{e}_{\gamma_1} + \mathfrak{e}_{\gamma_4}) + (\mathfrak{e}_{(0, 0, 2/5)} + \mathfrak{e}_{(0, 0, 3/5)} - \mathfrak{e}_{(0, 0, 1/5)} + \mathfrak{e}_{(0, 0, 4/5)} ) + O(q^{1/20}) \\ H_1: &\quad q^{-1/20} (\mathfrak{e}_{\gamma_2} + \mathfrak{e}_{\gamma_3}) + (\mathfrak{e}_{(1/5, 0, 0)} + \mathfrak{e}_{(4/5, 0, 0)} - \mathfrak{e}_{(2/5, 0, 0)} + \mathfrak{e}_{(3/5, 0, 0)} ) + O(q^{1/20}) \\ H_2: &\quad q^{-1/20} (\mathfrak{e}_{\gamma_1} + \mathfrak{e}_{\gamma_4}) + (\mathfrak{e}_{(2/5, 0, 0)} + \mathfrak{e}_{(3/5, 0, 0)} - \mathfrak{e}_{(1/5, 0, 0)} + \mathfrak{e}_{(4/5, 0, 0)} )  + O(q^{1/20}). \end{align*} These expansions determine $G_1,G_2,H_1,H_2$ uniquely because there are no vector-valued cusp forms of weight $3/2$ for $\rho_L$.
\end{proof}

\begin{rem} The Fourier expansions of $g_1, g_2, h_1, h_2$ begin as follows:
\begin{align*}
g_1\left( \begin{psmallmatrix} \tau & z \\ z & w \end{psmallmatrix}\right) &= q + q^2 - 5qs + 2q^3 + (-3r^{-1} + 1 - 3r) q^2 s + (-r^{-1} + 7 - r) qs^2 + O(q, s)^4;\\
g_2\left( \begin{psmallmatrix} \tau & z \\ z & w \end{psmallmatrix}\right) &= -q - q^2 + (r^{-1} + 3 + r)qs - 2q^3 + (-r^{-1} + 7 - r) q^2 s + (-3r^{-1} + 1 - 3r) qs^2 + O(q, s)^4;\\
h_1 \left( \begin{psmallmatrix} \tau & z \\ z & w \end{psmallmatrix}\right) &= s - 5qs + s^2 + (-r^{-1} +7 - r) q^2 s + (-3r^{-1} + 1 - 3r) qs^2 + 2s^3 + O(q, s)^4;\\
h_2 \left( \begin{psmallmatrix} \tau & z \\ z & w \end{psmallmatrix}\right) &= -s + (r^{-1} + 3 + r) qs - s^3 + (-3r^{-1} + 1 - 3r) q^2 s + (-r^{-1} + 7 - r) qs^2 - 2s^3 + O(q, s)^4.
\end{align*}
\end{rem}

We can now determine the structure of $M_*^!(\Gamma_{0, 1}^{(2)}(5))$. Recall that $\Gamma_{0, 1}^{(2)}(5) /\{\pm I\} \cong \widetilde{\mathrm{SO}}(L)$. The decomposition
$$
M_k^!(\widetilde{\mathrm{SO}}(L))= M_k^!(\widetilde{\mathrm{O}}(L)) \oplus M_k^!(\widetilde{\mathrm{O}}(L), \det), 
$$
suggests that we first consider the ring of modular forms for the discriminant kernel $\widetilde{\mathrm{O}}(L)$. We will show that $M_*^!(\widetilde{\mathrm{O}}(L))$ is freely generated using a generalization of the modular Jacobian approach of \cite[Theorem 5.1]{Wan21a}. We briefly introduce the main objects of this approach. For any four $\psi_i\in M_{k_i}^!(\widetilde{\mathrm{O}}(L))$ with $1\leq i\leq 4$, their Jacobian (see \cite[Theorem 2.5]{Wan21a} and \cite[Proposition 2.1]{AI})
\begin{equation*}
J(\psi_1,\psi_2,\psi_3,\psi_4)=\left\lvert \begin{array}{cccc}
k_1\psi_1 & k_2\psi_2 & k_3\psi_3 & k_4\psi_4 \\ 
[1mm]
\frac{\partial \psi_1}{\partial \tau} & \frac{\partial \psi_2}{\partial \tau} & \frac{\partial \psi_3}{\partial \tau} & \frac{\partial \psi_4}{\partial \tau} \\[1mm]
\frac{\partial \psi_1}{\partial z} & \frac{\partial \psi_2}{\partial z} & \frac{\partial \psi_3}{\partial z} & \frac{\partial \psi_4}{\partial z} \\
[1mm]
\frac{\partial \psi_1}{\partial w} & \frac{\partial \psi_2}{\partial w} & \frac{\partial \psi_3}{\partial w} & \frac{\partial \psi_4}{\partial w} 
\end{array}   \right\rvert
\end{equation*}
lies in  $M_{k_1+k_2+k_3+k_4+3}^!(\widetilde{\mathrm{O}}(L),\det)$. The Jacobian $J(\psi_1,\psi_2,\psi_3,\psi_4)$ is not identically zero if and only if the four forms $\psi_i$ are algebraically independent over $\mathbb{C}$. 

The discriminant kernel $\widetilde{\mathrm{O}}(L)$ contains reflections associated to vectors of norm 1 in $L$ (the so-called $2$-reflections)
$$
\sigma_v: x \mapsto x - (x,v)v.
$$
The hyperplane $v^\perp$ is called the mirror of the reflection $\sigma_v$. Since $\det(\sigma_v)=-1$, the chain rule implies that the above Jacobian vanishes on all mirrors of $2$-reflections. Conversely, the main theorem of \cite{Wan21a}, and its generalization to meromorphic modular forms with constrained poles, implies that

\begin{thm}\label{th:singular1}
The ring $M_*^!(\widetilde{\mathrm{O}}(L))$ is a free algebra: $$M_*^!(\widetilde{\mathrm{O}}(L)) = \mathbb{C}[f_1, f_2, g_1, g_2].$$ 
Define $J:=J(f_1,f_2,g_1,g_2)$. Then
$$
M_*^!(\Gamma_{0, 1}^{(2)}(5)) = \mathbb{C}[f_1, f_2, g_1, g_2, J].
$$
\end{thm}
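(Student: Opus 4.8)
The plan is to establish the statement for the discriminant kernel $\widetilde{\mathrm{O}}(L)$ first, and then to descend to $\widetilde{\mathrm{SO}}(L) \cong \Gamma_{0,1}^{(2)}(5)/\{\pm I\}$ through the eigenspace decomposition $M_k^!(\widetilde{\mathrm{SO}}(L)) = M_k^!(\widetilde{\mathrm{O}}(L)) \oplus M_k^!(\widetilde{\mathrm{O}}(L),\det)$. For the free generation of $M_*^!(\widetilde{\mathrm{O}}(L))$ I would invoke the meromorphic version of the Jacobian criterion of \cite[Theorem 5.1]{Wan21a}, which needs two inputs: that $f_1,f_2,g_1,g_2$ are algebraically independent, and that their Jacobian $J$ has the smallest divisor compatible with anti-invariance. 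The first input is immediate, since algebraic independence is equivalent to $J\not\equiv 0$ and a single nonvanishing Fourier coefficient, read off from the leading terms tabulated above, already settles it.

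For the divisor condition I would argue as follows. Up to $\pm I$ the group $\widetilde{\mathrm{O}}(L)$ is generated by the $2$-reflections $\sigma_v$ in primitive norm-one vectors $v\in L$; as $\det(\sigma_v)=-1$ and $J\in M_9^!(\widetilde{\mathrm{O}}(L),\det)$, the chain rule forces $J$ to vanish along every mirror $v^\perp$. Only the divisor of $J$ on the complement $\mathbb{H}_2\setminus\mathcal{H}$ matters for the ring structure; its behavior along $\mathcal{H}$ is immaterial. At a generic point of a mirror the four generators are holomorphic and, being $\widetilde{\mathrm{O}}(L)$-invariant, even in the transverse coordinate, so a short local computation shows that $J$, built from a single derivative in the odd transverse direction, vanishes there to order exactly one. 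The substantive claim is that $J$ has no further zeros on $\mathbb{H}_2\setminus\mathcal{H}$, which I would obtain from the weight identity underlying the criterion of \cite{Wan21a}: one checks that $\mathrm{wt}(J)=9$ is accounted for by a reflective form cutting out the mirror arrangement together with a contribution along $\mathcal{H}$ (computable from the prescribed divisors of $f_1,f_2,g_1,g_2$; in fact $J$ is holomorphic along $\mathcal{H}$, the polar contributions of the generators cancelling in the determinant). Granting this, the criterion yields $M_*^!(\widetilde{\mathrm{O}}(L))=\mathbb{C}[f_1,f_2,g_1,g_2]$.

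To descend to $\Gamma_{0,1}^{(2)}(5)$ I would treat the $\det$-eigenspace. If $F\in M_k^!(\widetilde{\mathrm{O}}(L),\det)$, then $F\circ\sigma_v=-F$ for each $2$-reflection forces $F$ to vanish along the pointwise-fixed mirror $v^\perp$. Since, by the previous step, $J$ vanishes simply along each mirror and is nonzero elsewhere on $\mathbb{H}_2\setminus\mathcal{H}$, the quotient $F/J$ extends holomorphically across every mirror and is holomorphic on the complement, with poles confined to $\mathcal{H}$; it is moreover $\widetilde{\mathrm{O}}(L)$-invariant. Hence $F/J\in M_{k-9}^!(\widetilde{\mathrm{O}}(L))=\mathbb{C}[f_1,f_2,g_1,g_2]$ and $F\in J\cdot\mathbb{C}[f_1,f_2,g_1,g_2]$. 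Combining the two eigenspaces gives $M_*^!(\Gamma_{0,1}^{(2)}(5))=\mathbb{C}[f_1,f_2,g_1,g_2]\oplus J\,\mathbb{C}[f_1,f_2,g_1,g_2]=\mathbb{C}[f_1,f_2,g_1,g_2,J]$, the final equality because the $\det$-invariant $J^2$ is already a polynomial in $f_1,f_2,g_1,g_2$.

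I expect the main obstacle to be the precise determination of $\mathrm{div}(J)$ on the complement. Two points need care. First is the combinatorics of norm-one vectors of $L$: one must sort out exactly which Humbert surfaces occur as mirrors, with which discriminants, and identify a reflective form cutting them out, so that the balance $\mathrm{wt}(J)=9$ is confirmed rather than assumed --- this being complicated by the vanishing of $J$ along $\mathcal{H}$, so that the naive identity ``$J$ equals the product of the mirror forms'' fails and the $\mathcal{H}$-contribution must be separated off. Second is the justification of the meromorphic generalization of \cite[Theorem 5.1]{Wan21a} itself: the original result governs holomorphic forms on $\widetilde{\mathrm{O}}(L)\backslash\mathbb{H}_2$, whereas here it must be run on the complement of $\mathcal{H}$ and its Looijenga compactification, with Lemma \ref{lem:Koecher} supplying the boundedness input that in the holomorphic theory comes from the cusps.
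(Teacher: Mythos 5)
Your proposal is correct and follows essentially the same route as the paper: the paper pins down $\mathrm{div}(J)$ exactly as you suggest, by constructing (via the algorithm of \cite{W}) a weight-$9$ reflective Borcherds product $J_0$ with divisor $\Delta + 6H(1/20,\pm\gamma_1) + 6H(1/20,\pm\gamma_2)$ and applying Lemma \ref{lem:Koecher} to get $J = cJ_0$, and it then justifies the meromorphic Jacobian criterion inline by the minimal-weight determinant argument of \cite{Wan21a} (the identity $0 = \sum_{i=1}^5 (-1)^{i+1}k_i h_i J_i$ with $J_i/J$ holomorphic away from $\mathcal{H}$) --- precisely the two steps you flagged as the remaining obstacles. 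Your descent to $\Gamma_{0,1}^{(2)}(5)$ by dividing the $\det$-eigenspace by $J$ is identical to the paper's final paragraph.
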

\begin{proof} The Jacobian $J$ of $f_1, f_2, g_1, g_2$ has weight $9$ and vanishes on the mirrors of $2$-reflections, which form a union of Heegner divisors of discriminants $1/4$ and $1$ denoted $\Delta$. Using the Fourier expansions of the forms it is easy to check that $J$ is not identically zero. Using the algorithm of \cite{W} we find a Borcherds product $J_0$ with divisor $$\mathrm{div}\, J_0 = \Delta + 6 H(1/20, \pm \gamma_1) + 6 H(1/20, \pm \gamma_2).$$ The quotient $J / J_0$ lies in $M_0^!(\Gamma_{0, 1}^{(2)}(5),\chi)$ and is therefore a constant denoted $c$ by Lemma \ref{lem:Koecher}. We will now prove the claim by an argument which appeared essentially in \cite{Wan21a}. Suppose that $M_*^!(\widetilde{\mathrm{O}}(L))$ was not generated by $h_1 := f_1$, $h_2 := f_2$, $h_3 := g_1$ and $h_4 := g_2$, and let $h_5 \in M_{k_5}^!(\widetilde{\mathrm{O}}(L))$ be a modular form of minimal weight which is not contained in $\mathbb{C}[f_1,f_2,g_1,g_2]$. Set $k_1 = k_2 = 1$ and $k_3 = k_4 = 2$, such that $k_i$ is the weight of $h_i$. For $1\leq j \leq 5$ we define $J_j$ as the Jacobian of the four modular forms $h_i$ omitting $h_j$, such that $c J_0=J=J_{5}$.  It is clear that $g_j:=J_j/J$ is a modular form on $\widetilde{\mathrm{O}}(L)$ with poles supported on $\mathcal{H}$. We compute the determinant and find the identity
$$
0 = \mathrm{det} \begin{pmatrix} k_1 h_1 & ... & k_4 h_4 & k_5 h_5 \\ k_1 h_1 & ... & k_4 h_4 & k_5 h_5 \\ \nabla h_1 & ... & \nabla h_4 & \nabla h_5 \end{pmatrix} = \sum_{i=1}^5 (-1)^{i+1} k_i h_i J_i.
$$
Since $J_i=Jg_i$ and $g_5=1$, we have 
$$
\sum_{i=1}^{5} (-1)^{i+1} k_t h_t g_t = 0, \quad  \text{i.e} \quad k_5 h_5 = \sum_{i=1}^4 (-1)^i h_i g_i.
$$
Since $h_5$ was chosen to have minimal weight, $g_i \in \mathbb{C}[h_1,h_2,h_3,h_4]$ for all $i$, and thus $h_5\in \mathbb{C}[h_1,h_2,h_3,h_4]$, which is a contradiction.

Now any $h\in M_k^!(\widetilde{\mathrm{O}}(L),\det)$ vanishes on all mirrors of $2$-reflections, which implies that $h/J \in M_{k-9}^!(\widetilde{\mathrm{O}}(L))$. Therefore $$M_*^!(\Gamma_{0,1}^{(2)}(5)) = M_*^!(\widetilde{\mathrm{SO}}(L)) = M_*^!(\widetilde{\mathrm{O}}(L)) \oplus M_*^!(\widetilde{\mathrm{O}}(L), \det)$$ is generated by $f_1$, $f_2$, $g_1$, $g_2$ and $J$.
\end{proof}

\begin{rem} 
The weight two singular Gritsenko lifts satisfy the relations $$g_1 - h_1 = h_2 - g_2 = f_1 f_2.$$ 
The product $f_1 f_2$ is holomorphic and in fact itself a Gritsenko lift; but it has a quadratic character under $\Gamma_0^{(2)}(5)$. There is a unique normalized Siegel modular form $e_2$ of weight two for $\Gamma_0^{(2)}(5)$, which can be constructed as the Gritsenko lift of the unique vector-valued modular form of weight $3/2$ for $\rho_L$ invariant under all automorphisms of the discriminant form. (The uniqueness follows from Corollary \ref{cor:level5gen} blow.) In terms of the generators of $M_*^!(\Gamma_{0, 1}^{(2)}(5))$, a computation shows $$e_2 = f_1^2 + f_2^2 - 4(g_1 + g_2).$$
\end{rem}

\begin{cor}\label{cor:level5gen} The ring $M_*^!(\Gamma_0^{(2)}(5))$ is minimally generated in weights $2, 2, 4, 4, 4, 4, 4, 11, 11, 11$ by the ten forms
\begin{align*}
&f_1^2 + f_2^2,& &e_2,& &f_1^2 g_1 + f_2^2 g_2,& &f_1 f_2 (g_1 - g_2),& &f_1 f_2 (f_1 - f_2)(f_1 + f_2),&\\
&f_1^2 f_2^2,& &g_1 g_2,& &J f_1 f_2,& &J (f_1^2 - f_2^2),& &J (g_1 - g_2).&    
\end{align*}
\end{cor}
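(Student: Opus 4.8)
The plan is to realize $M_*^!(\Gamma_0^{(2)}(5))$ as a ring of invariants inside the ring determined in Theorem \ref{th:singular1}. Since $\Gamma_0^{(2)}(5)/\{\pm I\}$ is generated by $\widetilde{\mathrm{SO}}(L) = \Gamma_{0,1}^{(2)}(5)/\{\pm I\}$ together with the cosets $\varepsilon_u$, $u \in (\mathbb{Z}/5\mathbb{Z})^\times$, and $(\mathbb{Z}/5\mathbb{Z})^\times$ is cyclic of order four generated by $2$, the quotient $G := \Gamma_0^{(2)}(5)/\Gamma_{0,1}^{(2)}(5) \cong \mathbb{Z}/4\mathbb{Z}$ is generated by $\varepsilon_2$, and
$$M_*^!(\Gamma_0^{(2)}(5)) = M_*^!(\Gamma_{0,1}^{(2)}(5))^{\langle \varepsilon_2\rangle} = \mathbb{C}[f_1, f_2, g_1, g_2, J]^{\langle \varepsilon_2\rangle}.$$
First I would compute the action of $\varepsilon_2$ on the five generators. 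Tracking the induced permutation of the cosets ($\varepsilon_2 : \gamma_1 \mapsto \gamma_2 \mapsto \gamma_4 \mapsto \gamma_3 \mapsto \gamma_1$, together with $\gamma_4 = -\gamma_1$ and $\gamma_3 = -\gamma_2$, so that $H(1/20,\pm\gamma_4) = H(1/20,\pm\gamma_1)$ and $H(1/20,\pm\gamma_3) = H(1/20,\pm\gamma_2)$) shows that $\varepsilon_2$ carries the divisor of $f_1$ to that of $f_2$ and conversely, and interchanges the divisors of $g_1$ and $g_2$. By Lemma \ref{lem:Koecher} each form is determined up to scalar by its divisor, so $\varepsilon_2$ acts linearly on $\langle f_1, f_2\rangle$ and on $\langle g_1, g_2\rangle$. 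The precise signs — the crux of this step — are pinned down by noting that $\varepsilon_2^2 = \varepsilon_{-1}$ acts on $L'/L$ by $\gamma \mapsto -\gamma$, under which the input of $f_i$ is antisymmetric and that of $g_i$ is symmetric; hence $f_i|\varepsilon_2^2 = -f_i$ and $g_i|\varepsilon_2^2 = g_i$, forcing $f_1 \mapsto f_2 \mapsto -f_1$ and $g_1 \mapsto g_2 \mapsto g_1$. This is consistent with the stated $\Gamma_0^{(2)}(5)$-invariance of $e_2 = f_1^2 + f_2^2 - 4(g_1 + g_2)$. Finally $J = J(f_1,f_2,g_1,g_2)$ is, up to scalar, the unique form of its weight transforming with $\det$ under $\widetilde{\mathrm{SO}}(L)$ and vanishing on the $2$-reflection mirrors, a configuration $\varepsilon_2$ preserves; so $J|\varepsilon_2 = \eta J$, and computing the determinant of the linear substitution above gives $\eta = -1$.

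Next I would diagonalize the $\mathbb{Z}/4$-action. Writing $G = \langle \varepsilon_2\rangle$ with characters $\chi_a(\varepsilon_2) = i^a$, the eigencoordinates
$$u = f_1 - i f_2, \qquad v = f_1 + i f_2, \qquad a = g_1 + g_2, \qquad b = g_1 - g_2$$
carry characters $\chi_1, \chi_3, \chi_0, \chi_2$ respectively, and $J$ carries $\chi_2$. Since $J$ spans a free rank-one module over $S := \mathbb{C}[f_1,f_2,g_1,g_2] = \mathbb{C}[u,v,a,b]$ (by the description of $M_*^!(\widetilde{\mathrm{SO}}(L))$ in Theorem \ref{th:singular1}), the ring splits as $S \oplus JS$, and taking invariants gives
$$M_*^!(\Gamma_0^{(2)}(5)) = S_0 \oplus J\, S_2,$$
where $S_c \subseteq S$ is the $\chi_c$-isotypic component, so that $J S_2$ is $\chi_2 \cdot \chi_2 = \chi_0$-invariant. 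Both summands are then computed by elementary monomial invariant theory: $u^i v^j a^k b^\ell$ lies in $S_c$ iff $i - j + 2\ell \equiv c \pmod 4$. One finds that $S_0$ is generated by the seven monomials $a$, $uv$, $b^2$, $u^2 b$, $v^2 b$, $u^4$, $v^4$, while $S_2$ is generated as an $S_0$-module by $b, u^2, v^2$, so that $J S_2$ is generated over $S_0$ by $Jb, Ju^2, Jv^2$.

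I would then read off the generators in the original forms: the weight-two space is $\langle uv, a\rangle = \langle f_1^2+f_2^2,\, e_2\rangle$; the five weight-four monomials $u^4\pm v^4$, $u^2 b \pm v^2 b$, $b^2$ are recovered (modulo products of weight-two forms) by $f_1 f_2(f_1^2-f_2^2)$, $f_1^2 f_2^2$, $f_1^2 g_1 + f_2^2 g_2$, $f_1 f_2(g_1-g_2)$, $g_1 g_2$; and $Jb = J(g_1-g_2)$ together with $J(f_1^2-f_2^2)$, $Jf_1f_2$ spanning $\langle Ju^2, Jv^2\rangle$ give the three weight-eleven generators. This shows the ten forms generate. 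For minimality I would compare, in each weight, the generator count with $\dim_\mathbb{C}(\mathfrak{m}/\mathfrak{m}^2)$: in weight two there are no decomposables and $(S_0)_2$ is two-dimensional; in weight four $\dim(S_0)_4 = 8$ while products of weight-two forms span only the three-dimensional $\langle (uv)^2, uva, a^2\rangle$, leaving exactly five new generators; and for weight eleven the decisive point is parity — every $J$-free generator has even weight, so no product of lower generators is odd, while $(JS_2)_{11}\cong (S_2)_2 = \langle u^2, v^2, b\rangle$ is three-dimensional, so all three are indispensable and (because $S_2$ is generated over $S_0$ in weight two) no further odd-weight generators occur. The main obstacle throughout is the very first step: extracting the exact linear action of $\varepsilon_2$, signs included, from the divisor and Fourier data; once this is fixed, the remainder is a finite and essentially mechanical computation in the invariant theory of $\mathbb{Z}/4\mathbb{Z}$.
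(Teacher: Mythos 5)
Your proposal is correct and follows essentially the same route as the paper: you realize $M_*^!(\Gamma_0^{(2)}(5))$ as the ring of invariants of $\mathbb{C}[f_1,f_2,g_1,g_2,J]$ under the order-four action of $\varepsilon_2$ (with exactly the action $f_1 \mapsto f_2 \mapsto -f_1$, $g_1 \leftrightarrow g_2$, $J \mapsto -J$ that the paper reads off the lift inputs and the Jacobian), and then compute the invariants. The only difference is one of detail: the paper simply asserts that the listed forms generate the invariant ring, whereas you carry out the diagonalization, the monomial invariant theory, and the indecomposability count explicitly.
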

\begin{proof} The group $\Gamma_0^{(2)}(5)$ is generated by the special discriminant kernel of $L$ and by the order four automorphism $\varepsilon_2$ which acts on the generators of $M_*^!(\Gamma_{0, 1}^{(2)}(5))$ by $$\varepsilon_2 : f_1 \mapsto f_2, \; f_2 \mapsto -f_1, \; g_1 \mapsto g_2, \; g_2 \mapsto g_1, \; J \mapsto -J$$ as one can see on the input functions into the Gritsenko lifts. We conclude the action of $\varepsilon_2$ on $J$ (as a Jacobian) from the actions of $\varepsilon_2$ on $f_1$, $f_2$, $g_1$ and $g_2$. The expressions in $f_1, f_2, g_1, g_2, J$ in the claim generate the ring of invariants under this action.  
\end{proof}

\begin{rem} The same argument shows that the kernel of $\varepsilon_2^2 = \varepsilon_4$ is generated by $J$ and by the weight two forms $$f_1^2, f_2^2, f_1 f_2, g_1, g_2.$$ This corresponds to the quadratic Nebentypus $\chi\left( \begin{psmallmatrix} a & b \\ c & d \end{psmallmatrix} \right) = \left( \frac{5}{\mathrm{det}\, d}\right)$ on $\Gamma_0^{(2)}(5)$. Note that $f_1 f_2$ is the Siegel Eisenstein series of weight two for the character $\chi$, and that the Jacobian $J$ is the unique cusp form of weight nine for $\chi$ up to scalars.
\end{rem}

\begin{rem}\label{rem:liftslevel5} There is a seven-dimensional space of modular forms of weight $7/2$ for $\rho_L$, and a four-dimensional subspace on which $\varepsilon_2$ acts trivially, so the weight four Maass space for $\Gamma_0^{(2)}(5)$ is four-dimensional. Using the structure theorem above we can identify it by comparing only a few Fourier coefficients: $$\mathrm{Maass}_4 = \mathrm{Span}(\phi_1, \phi_2, \phi_3, \phi_4)$$ where \begin{align*} \phi_1 &= e_2^2 + f_1^2 f_2^2; \\ \phi_2 &= f_1^2 g_1 + f_2^2 g_2 - 2g_1 g_2; \\ \phi_3 &= f_1 f_2 (f_1^2 - 2 f_1 f_2 - f_2^2 + 2 g_1 - 2g_2); \\ \phi_4 &= 2 g_1 g_2 + f_1 f_2 (g_1 - g_2). \end{align*} The form $\phi_4$ is a cusp form and indeed spans $S_4(\Gamma_0^{(2)}(5))$, which was shown to be one-dimensional by Poor and Yuen \cite{PY}.
\end{rem}

\section{The ring of holomorphic Siegel modular forms of level 5}\label{sec:holomorphic}
In this section we investigate the ring $M_*(\Gamma_0^{(2)}(5))$ of holomorphic Siegel modular forms for $\Gamma_0^{(2)}(5)$. We will need the Hilbert--Poincar\'e series for this ring, which can be derived from dimension formulas available in the literature.
\begin{thm}\label{thm:dim} The Hilbert--Poincar\'e series of dimensions of modular forms for $\Gamma_0^{(2)}(5)$ is $$\sum_{k = 0}^{\infty} \mathrm{dim}\, M_k(\Gamma_0^{(2)}(5)) t^k = \frac{(1 - t)^2 (1 + t^7) P(t)}{(1 - t^2)^2 (1 - t^3)(1 - t^4)^2 (1 - t^5)}$$ where $P(t)$ is the irreducible palindromic polynomial $$P(t) = 1 + 2 t + 2 t^{2} +  t^{3} + 3 t^{4} + 5 t^{5} + 8 t^{6} + 8 t^{7} + 8 t^{8} + 5 t^{9} + 3 t^{10} +  t^{11} + 2 t^{12} + 2 t^{13} +  t^{14}.$$
\end{thm}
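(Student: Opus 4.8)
The plan is to split $\dim M_k(\Gamma_0^{(2)}(5)) = \dim S_k(\Gamma_0^{(2)}(5)) + \dim\mathrm{Eis}_k$ into its cuspidal and Eisenstein parts, to express each summand as an explicit quasi-polynomial in $k$, and then to add the two associated generating series and check that the sum collapses to the asserted rational function. The cuspidal term is exactly the output of the Selberg trace formula: specializing Hashimoto's computation \cite{H} to $\Gamma = \Gamma_0^{(2)}(5)$ gives a closed expression for $\dim S_k$ valid for all weights above the range in which the trace formula degenerates (roughly $k \ge 5$). Since this expression is a quasi-polynomial whose period divides the orders of the elliptic and torsion contributions at level $5$, its generating series $\sum_k \dim S_k\, t^k$ is automatically a rational function whose denominator is a product of factors $1 - t^a$, and I would record this explicitly.

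For the Eisenstein part I would use the Siegel $\Phi$-operator, whose kernel on $M_k$ is precisely $S_k$, so that $\dim\mathrm{Eis}_k = \dim\,\mathrm{image}(\Phi)$. This image is controlled entirely by the boundary of the Satake--Baily--Borel compactification. I would enumerate the cusps of $\Gamma_0^{(2)}(5)$: the rank-one boundary components carry spaces of elliptic modular forms for the induced congruence groups (subgroups commensurable with $\mathrm{SL}_2(\mathbb{Z})$ and $\Gamma_0(5)$), whose dimensions are classical, and the image of $\Phi$ is the space of tuples of such elliptic forms that are compatible at the rank-zero cusps. For $k$ large the relevant Siegel and Klingen Eisenstein series are holomorphic and span this image, so $\dim\mathrm{Eis}_k$ is again an explicit quasi-polynomial, which I would convert into a generating series with a similar cyclotomic-type denominator.

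Adding the two series over the common denominator $(1-t^2)^2(1-t^3)(1-t^4)^2(1-t^5)$ should produce a numerator which I would then verify to equal $(1-t)^2(1+t^7)P(t)$; in particular the factor $(1+t^7)$ and the palindromic polynomial $P(t)$ are simply the algebraic residue of this addition, and the identity reduces to matching the finitely many coefficients that the fixed denominator forces. Because the whole statement is an equality of rational functions, it suffices to confirm agreement of $\dim M_k$ up to the degree determined by the denominator and to check that the quasi-polynomial tail matches.

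The hard part will be the small-weight range, where Hashimoto's formula is inapplicable and where the Eisenstein series can fail to be holomorphic or to span $\mathrm{image}(\Phi)$ --- concretely the weights $k \le 4$, together with the odd-weight subtleties (for the full group odd-weight forms are heavily suppressed, whereas at level $5$ they already occur, as the weights $11,13,15$ of the generators show). For these weights I would instead compute $\dim M_k$ by hand from the explicit forms already produced: the lifts $f_1,f_2,g_1,g_2$ and their Jacobian $J$, the invariant combinations of Corollary \ref{cor:level5gen}, the Eisenstein series $e_2$, and the weight-four Maass forms of Remark \ref{rem:liftslevel5} (recall $\dim S_4(\Gamma_0^{(2)}(5)) = 1$ by \cite{PY}). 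Reconciling these hand-computed initial coefficients with the trace-formula tail, and confirming that no additional boundary correction terms intervene at the junction of the two regimes, is the delicate step; once the coefficients agree, the rational-function identity follows formally.
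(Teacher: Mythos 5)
Your overall skeleton (cuspidal part from Hashimoto's trace formula, non-cuspidal part from the boundary via the $\Phi$-operator, then a formal rational-function verification) matches the paper's proof, but the step you yourself flag as ``delicate'' is where the argument actually breaks, and it is not a removable difficulty. The paper's proof is short precisely because it imports a theorem of B\"ocherer--Ibukiyama \cite{BI}: for even $k > 2$,
\[
\dim M_k(\Gamma_0^{(2)}(5)) = \dim S_k(\Gamma_0^{(2)}(5)) + 2\dim S_k(\Gamma_0(5)) + 3,
\]
together with the facts that all odd-weight forms are cusp forms and that $\dim S_4(\Gamma_0^{(2)}(5)) = 1$, $\dim S_k(\Gamma_0^{(2)}(5)) = 0$ for $k \le 3$ by Poor--Yuen \cite{PY}. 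The crucial point is that this formula is valid down to weight $4$, exactly the range where your Eisenstein-series argument fails: the degree-two Klingen Eisenstein series attached to a weight-$4$ elliptic cusp form does not converge absolutely (convergence needs $k>4$), so ``for $k$ large the Eisenstein series span the image of $\Phi$'' does not settle $k=4$, and nothing converges at $k=2$. These weights are not an afterthought, because your final check reduces to matching finitely many initial coefficients, and those coefficients include $\dim M_2 = 1$ and $\dim M_4 = 6 = 1 + 2\cdot 1 + 3$ (the last summand using $\dim S_4(\Gamma_0(5)) = 1$). Also, a smaller confusion: no hand computation is needed in the odd weights $11, 13, 15$ that worry you --- Hashimoto's formula \cite{H} covers all $k \ge 5$ regardless of parity, and odd-weight forms are automatically cuspidal since the boundary groups contain $-I$.

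Your proposed fallback for the problematic weights --- computing $\dim M_k$ ``by hand from the explicit forms already produced'' --- is not a proof as it stands. Exhibiting holomorphic combinations of $f_1, f_2, g_1, g_2, J$ (such as $e_2$, the Maass forms of Remark \ref{rem:liftslevel5}, or the generators listed in Corollary \ref{cor:level5gen}) gives only \emph{lower} bounds on $\dim M_k$. For the upper bound you must determine exactly which elements of the weight-$k$ graded piece of the meromorphic ring are pole-free along $\mathcal{H}$, and poles of individual monomials can cancel in linear combinations, so this requires controlling quasi-pullbacks/expansions along the Humbert surfaces. That is precisely the difficulty the paper flags when it says it is not clear how to compute $M_*(\Gamma_0^{(2)}(5))$ from the meromorphic structure alone, and it is why the dimension formula is imported from the literature rather than derived from Theorem \ref{th:singular1}. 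Concretely, at $k=4$ the relevant meromorphic space is eight-dimensional (spanned by $e_2^2$, $e_2(f_1^2+f_2^2)$, $(f_1^2+f_2^2)^2$ and the five weight-four generators) and you must show that exactly a six-dimensional subspace is holomorphic; nothing in your outline accomplishes this. To repair the argument you would either have to carry out this pole-cancellation analysis rigorously at $k=2,4$, or do what the paper does and invoke B\"ocherer--Ibukiyama.
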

The first values of $\dim M_k(\Gamma_0^{(2)}(5))$ are given in Table \ref{tab} below.

\begin{table}[ht]
\caption{$\dim M_k(\Gamma_0^{(2)}(5))$}\label{tab}
\renewcommand\arraystretch{1.5}
\noindent\[
\begin{array}{|c|c|c|c|c|c|c|c|c|c|c|c|c|c|c|c|c|c|c|c|}
\hline 
k & 1 & 2 & 3 & 4 & 5 & 6 & 7 & 8 & 9 & 10 & 11 & 12 & 13 & 14 &15 & 16 &17 &18 & 19 \\ 
\hline 
\text{dim} & 0 & 1 & 0 & 6 & 0 & 10 & 0 & 22 & 0 & 34 & 3 & 57 & 6 & 79 & 16 & 117 & 25 & 153 & 45  \\ 
\hline 
\end{array} 
\]
\end{table}

\begin{proof} The dimensions of the spaces of cusp forms of weight $k \ge 5$ have been computed in closed form by Hashimoto by means of the Selberg trace formula and in lower weights by Poor and Yuen \cite{PY}: we have $\mathrm{dim}\, S_4(\Gamma_0^{(2)}(5)) = 1$ and $\mathrm{dim}\, S_k(\Gamma_0^{(2)}(5)) = 0$ for $k \le 3$. All odd-weight modular forms are cusp forms, and by a more general theorem of B\"ocherer--Ibukiyama \cite{BI}, for even $k > 2$, $$\mathrm{dim}\, M_k(\Gamma_0^{(2)}(5)) = \mathrm{dim}\, S_k(\Gamma_0^{(2)}(5)) + 2 \cdot \mathrm{dim}\, S_k(\Gamma_0(5)) + 3.$$
\end{proof}
We can now determine the generators of $M_*(\Gamma_0^{(2)}(5))$ using Corollary \ref{cor:level5gen} together with the above generating series.

\begin{thm}\label{thm:lvl5hol} The ring of Siegel modular forms of level $\Gamma_0^{(2)}(5)$ is minimally generated by the weight two form $$e_2 = f_1^2 + f_2^2 - 4 g_1 - 4g_2,$$ five weight four forms $$f_1^2 g_1 + f_2^2 g_2, \quad f_1 f_2 (g_1 - g_2), \quad f_1 f_2 (f_1^2 - f_2^2), \quad f_1^2 f_2^2, \quad g_1 g_2,$$ four weight six forms $$f_1^2 f_2^2 (g_1 + g_2), \quad f_1^3 f_2 g_1 - f_2^3 f_1 g_2, \quad f_1^2 g_1^2 + f_2^2 g_2^2, \quad g_1 g_2 (f_1^2 + f_2^2),$$ the weight ten form $$f_1^2 f_2^2 (f_1^2 + f_2^2)^3,$$ three weight eleven forms $$f_1 f_2 J, \quad (f_1^2 - f_2^2) J, \quad  (g_1 - g_2) J,$$ three weight thirteen forms $$(f_1^2 + f_2^2) f_1 f_2 J, \quad (f_1^4 - f_2^4) J, \quad (f_1^2 + f_2^2)(g_1 - g_2) J,$$ and the weight fifteen form $$(f_1^2 - f_2^2)^3 J.$$
\end{thm}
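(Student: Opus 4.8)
The plan is to leverage Corollary \ref{cor:level5gen}, which already gives a complete (though non-minimal for the holomorphic subring) description of $M_*^!(\Gamma_0^{(2)}(5))$ as the ring of $\varepsilon_2$-invariants in $\mathbb{C}[f_1,f_2,g_1,g_2,J]$, generated by the ten explicit meromorphic forms listed there. The holomorphic ring $M_*(\Gamma_0^{(2)}(5))$ is the subring of those invariant forms that are actually holomorphic, i.e. whose divisor along $\mathcal{H}$ is nonnegative. First I would observe that each candidate generator in the statement is manifestly a polynomial in $f_1,f_2,g_1,g_2,J$, so it lies in $M_*^!(\Gamma_0^{(2)}(5))$, and I would verify holomorphy by checking that the prescribed combinations cancel the poles along $H(1/20,\pm\gamma_1)$ and $H(1/20,\pm\gamma_2)$ using the divisors computed in Lemmas \ref{wt1} and the weight-two lemma; for instance $f_1^2f_2^2$ clears the simple poles of $f_1,f_2$ to fourth order, and the weight-four combinations are arranged to have the pole orders match. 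This reduces the problem to a bookkeeping of divisors together with the $\varepsilon_2$-invariance recorded in the proof of Corollary \ref{cor:level5gen}.

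Next I would turn the generation claim into a finite computation. The strategy is to compute, degree by degree, the subspace of holomorphic invariants inside $M_k^!(\Gamma_0^{(2)}(5))$ and compare its dimension against $\dim M_k(\Gamma_0^{(2)}(5))$ supplied by Theorem \ref{thm:dim} and Table \ref{tab}. Concretely, I would set up a polynomial ring $\mathbb{C}[x_1,\dots,x_{10}]$ mapping onto the ten generators of Corollary \ref{cor:level5gen}, impose the holomorphy conditions, and in each weight intersect the span of monomials in the eighteen proposed generators with the full space of holomorphic forms. Because the Fourier expansions of $f_1,f_2,g_1,g_2$ (and hence $J$) are known to arbitrary order from the remarks following Lemmas \ref{wt1} and the weight-two lemma, the pairing of abstract monomials against finitely many Fourier coefficients is effective: one checks that the eighteen forms are algebraically independent-modulo-relations enough to span each graded piece, which is exactly a Gröbner basis / Hilbert series matching computation. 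I would confirm that the Hilbert--Poincaré series of the subring generated by the eighteen forms equals the series of Theorem \ref{thm:dim}; the appearance of $(1+t^7)$ and the denominator $(1-t^2)^2(1-t^3)(1-t^4)^2(1-t^5)$ strongly signals the weights $2,4,4,4,4,4,6,\dots$ of the generators and the structure of the relation ideal.

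Finally I would establish minimality. For this I would show that no proposed generator lies in the subring generated by the others of lower or equal weight: in each weight $k\in\{2,4,6,10,11,13,15\}$ I compare $\dim M_k(\Gamma_0^{(2)}(5))$ with the dimension of the image of degree-$k$ products of strictly lower-weight generators, and check that the deficit equals the number of new generators assigned to that weight. The weight-two space is one-dimensional and forces $e_2$; the five weight-four forms are needed because the products $e_2^2$ only account for part of the six-dimensional $M_4$; the odd-weight generators first appear in weight $11$ since $J$ has weight $9$ but the smallest holomorphic $\varepsilon_2$-invariant multiples of $J$ are $Jf_1f_2$, $J(f_1^2-f_2^2)$, $J(g_1-g_2)$ of weight $11$, matching $\dim M_{11}=3$.

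I expect the main obstacle to be the weight-ten generator $f_1^2f_2^2(f_1^2+f_2^2)^3$ and the exact matching in the higher even weights, where the relation ideal among the eighteen generators becomes intricate: one must ensure that the single degree-ten generator is genuinely independent of all degree-ten products of the weight $2,4,6$ forms, and that no further generators are needed beyond weight fifteen. Verifying this rests entirely on the correctness of Theorem \ref{thm:dim} and on carrying the Fourier expansions to sufficiently high order, so the delicate part is controlling truncation error in the Gröbner computation rather than any conceptual difficulty.
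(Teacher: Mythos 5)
Your proposal follows the same route as the paper's proof: verify holomorphy and $\varepsilon_2$-invariance of the eighteen forms from the divisors of $f_1, f_2, g_1, g_2, J$, then show that the subring $R$ they generate has Hilbert--Poincar\'e series equal to the one supplied by Theorem \ref{thm:dim} (the paper does this in Macaulay2 \cite{M2}), which forces $R_k = M_k(\Gamma_0^{(2)}(5))$ in every weight; your degree-by-degree dimension-deficit count for minimality is the standard reading of ``minimally generated'' and is consistent with the paper.

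One correction, because your closing paragraph mischaracterizes what makes this a proof: the Gr\"obner basis/Hilbert series step is \emph{exact} symbolic algebra, involving no Fourier expansions and no truncation error whatsoever. By Theorem \ref{th:singular1} the forms $f_1, f_2, g_1, g_2$ are algebraically independent with $M_*^!(\widetilde{\mathrm{O}}(L)) = \mathbb{C}[f_1, f_2, g_1, g_2]$, and $J^2$ is an explicit polynomial in them (the appendix); hence $\mathbb{C}[f_1, f_2, g_1, g_2, J]$ has the exact finite presentation $\mathbb{C}[y_1, \dots, y_5]/\bigl(y_5^2 - P(y_1, \dots, y_4)\bigr)$, and the subring generated by the eighteen candidate forms, together with its Hilbert series as a rational function, is computed inside this presentation purely formally. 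This matters for the logic and not merely for convenience: a check that pairs monomials against finitely many Fourier coefficients gives only lower bounds $\dim R_k \ge d$ for the finitely many weights $k$ you actually test, so it can never certify generation in \emph{all} weights, which is what the theorem asserts; only the exact rational Hilbert series of the abstractly presented subring, compared with the rational function of Theorem \ref{thm:dim}, closes the argument for every $k$ at once. Fourier expansions enter only upstream --- in establishing Lemma \ref{wt1}, Theorem \ref{th:singular1}, the identity $e_2 = f_1^2 + f_2^2 - 4(g_1+g_2)$, and the polynomial expression for $J^2$ --- all of which are in place before the proof of this theorem begins, so the ``delicate part'' you anticipate does not exist.
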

\begin{proof} From the divisors of $f_1, f_2, g_1, g_2$ and $J$, it is easy to see that all of the forms above (except for $e_2$, which was discussed in the previous section) are holomorphic and $\varepsilon_2$-invariant. The Hilbert series of this ring was computed in Macaulay2 \cite{M2} and coincides exactly with the series predicted by Theorem \ref{thm:dim}, so we can conclude that these forms are sufficient to generate all holomorphic Siegel modular forms.
\end{proof}

\bigskip

\noindent
\textbf{Acknowledgements} 
H. Wang was supported by the Institute for Basic Science (IBS-R003-D1).

\section{Appendix}

On the following pages, we list more Fourier coefficients of the basic meromorphic forms $f_1, f_2, g_1, g_2$, as well as the unique expression for $J^2$ as a polynomial in these forms.

Note that the polynomial representing $J^2$ must split into two irreducible factors, corresponding to the two classes of reflections whose mirrors lie in the divisor of $J$. One of these factors is $g_1 + g_2$.

\begin{figure}
    \centering
        
    \begin{tabular}{m|m|m||m|m|m|m}
        a & b & c & f_1 & f_2 & g_1 & g_2  \\ \hline
        0 & 0 & 0 & 1 & 0 & 0 & 0 \\ 1 & 0 & 0 & 3 & 1 & 1 & -1 \\0 & 0 & 1 & 3 & -1 & 0 & 0 \\2 & 0 & 0 & 4 & -2 & 1 & -1 \\1 & -1 & 1 & 2 & 0 & 0 & 1 \\1 & 0 & 1 & 6 & 0 & -5 & 3 \\1 & 1 & 1 & 2 & 0 & 0 & 1 \\0 & 0 & 2 & 4 & 2 & 0 & 0 \\3 & 0 & 0 & 2 & 4 & 2 & -2 \\2 & -1 & 1 & 4 & -4 & -3 & -1 \\2 & 0 & 1 & 2 & -2 & 1 & 7 \\2 & 1 & 1 & 4 & -4 & -3 & -1 \\1 & -1 & 2 & 4 & 4 & -1 & -3 \\1 & 0 & 2 & 2 & 2 & 7 & 1 \\1 & 1 & 2 & 4 & 4 & -1 & -3 \\0 & 0 & 3 & 2 & -4 & 0 & 0 \\4 & 0 & 0 & 1 & -3 & 3 & -3 \\3 & -1 & 1 & 6 & 6 & -2 & 4 \\3 & 0 & 1 & -2 & -2 & -6 & 2 \\3 & 1 & 1 & 6 & 6 & -2 & 4 \\2 & -2 & 2 & 2 & 0 & 0 & 2 \\2 & -1 & 2 & 8 & 0 & 10 & -2 \\2 & 0 & 2 & 0 & 0 & -15 & -5 \\2 & 1 & 2 & 8 & 0 & 10 & -2 \\2 & 2 & 2 & 2 & 0 & 0 & 2 \\1 & -1 & 3 & 6 & -6 & 4 & -2 \\1 & 0 & 3 & -2 & 2 & 2 & -6 \\1 & 1 & 3 & 6 & -6 & 4 & -2 \\0 & 0 & 4 & 1 & 3 & 0 & 0 \\5 & 0 & 0 & 3 & 1 & 5 & -5 \\4 & -1 & 1 & 0 & -8 & -2 & 10 \\4 & 0 & 1 & 0 & 6 & -11 & -5 \\4 & 1 & 1 & 0 & -8 & -2 & 10 \\3 & -2 & 2 & 0 & 6 & 3 & -5 \\3 & -1 & 2 & 0 & 0 & -10 & 10 \\3 & 0 & 2 & 0 & 8 & 24 & -20 \\3 & 1 & 2 & 0 & 0 & -10 & 10 \\3 & 2 & 2 & 0 & 6 & 3 & -5 \\2 & -2 & 3 & 0 & -6 & 3 & -5 \\2 & -1 & 3 & 0 & 0 & -10 & 10 \\2 & 0 & 3 & 0 & -8 & 24 & -20 \\2 & 1 & 3 & 0 & 0 & -10 & 10 \\2 & 2 & 3 & 0 & -6 & 3 & -5 \\1 & -1 & 4 & 0 & 8 & -2 & 10 \\1 & 0 & 4 & 0 & -6 & -11 & -5 \\1 & 1 & 4 & 0 & 8 & -2 & 10 \\0 & 0 & 5 & 3 & -1 & 0 & 0 \\ 6 & 0 & 0 & 6 & 2 & 2 & -2 \\5 & -2 & 1 & 3 & -1 & 0 & 0 \\5 & -1 & 1 & -2 & 4 & -15 & 10 \\5 & 0 & 1 & 8 & -6 & 5 & 5 \\5 & 1 & 1 & -2 & 4 & -15 & 10 \\5 & 2 & 1 & 3 & -1 & 0 & 0 \\4 & -2 & 2 & 6 & -6 & -4 & -8 \\4 & -1 & 2 & -8 & 8 & 12 & 4 \\4 & 0 & 2 & 14 & -14 & -1 & -7 
    \end{tabular} 
    \quad
    \begin{tabular}{m|m|m||m|m|m|m}
        a & b & c & f_1 & f_2 & g_1 & g_2 \\ \hline
        4 & 1 & 2 & -8 & 8 & 12 & 4 \\ 4 & 2 & 2 & 6 & -6 & -4 & -8 \\ 3 & -3 & 3 & 2 & 0 & 0 & 3 \\3 & -2 & 3 & 6 & 0 & -5 & -11 \\3 & -1 & 3 & -6 & 0 & 40 & 1 \\3 & 0 & 3 & 16 & 0 & -50 & -6 \\3 & 1 & 3 & -6 & 0 & 40 & 1 \\3 & 2 & 3 & 6 & 0 & -5 & -11 \\3 & 3 & 3 & 2 & 0 & 0 & 3 \\2 & -2 & 4 & 6 & 6 & -8 & -4 \\2 & -1 & 4 & -8 & -8 & 4 & 12 \\2 & 0 & 4 & 14 & 14 & -7 & -1 \\2 & 1 & 4 & -8 & -8 & 4 & 12 \\2 & 2 & 4 & 6 & 6 & -8 & -4 \\1 & -2 & 5 & 3 & 1 & 1 & -1 \\1 & -1 & 5 & -2 & -4 & -9 & 4 \\1 & 0 & 5 & 8 & 6 & 11 & -1 \\1 & 1 & 5 & -2 & -4 & -9 & 4 \\1 & 2 & 5 & 3 & 1 & 1 & -1 \\0 & 0 & 6 & 6 & -2 & 0 & 0 \\7 & 0 & 0 & 4 & -2 & 6 & -6 \\6 & -2 & 1 & 6 & 0 & -5 & 3 \\6 & -1 & 1 & 0 & 0 & 10 & -10 \\6 & 0 & 1 & 8 & 0 & -20 & 24 \\6 & 1 & 1 & 0 & 0 & 10 & -10 \\6 & 2 & 1 & 6 & 0 & -5 & 3 \\5 & -2 & 2 & 6 & 8 & 5 & 5 \\5 & -1 & 2 & -4 & -12 & 15 & -35 \\5 & 0 & 2 & 6 & 18 & -15 & 35 \\5 & 1 & 2 & -4 & -12 & 15 & -35 \\5 & 2 & 2 & 6 & 8 & 5 & 5 \\4 & -3 & 3 & -4 & -4 & -1 & -3 \\4 & -2 & 3 & 6 & 6 & 24 & 12 \\4 & -1 & 3 & -12 & -12 & -43 & -49 \\4 & 0 & 3 & 10 & 10 & 70 & 50 \\4 & 1 & 3 & -12 & -12 & -43 & -49 \\4 & 2 & 3 & 6 & 6 & 24 & 12 \\4 & 3 & 3 & -4 & -4 & -1 & -3 \\3 & -3 & 4 & -4 & 4 & -3 & -1 \\3 & -2 & 4 & 6 & -6 & 12 & 24 \\3 & -1 & 4 & -12 & 12 & -49 & -43 \\3 & 0 & 4 & 10 & -10 & 50 & 70 \\3 & 1 & 4 & -12 & 12 & -49 & -43 \\3 & 2 & 4 & 6 & -6 & 12 & 24 \\3 & 3 & 4 & -4 & 4 & -3 & -1 \\2 & -2 & 5 & 6 & -8 & -1 & 11 \\2 & -1 & 5 & -4 & 12 & 9 & -29 \\2 & 0 & 5 & 6 & -18 & -21 & 41 \\2 & 1 & 5 & -4 & 12 & 9 & -29 \\2 & 2 & 5 & 6 & -8 & -1 & 11 \\1 & -2 & 6 & 6 & 0 & -5 & 3 \\1 & -1 & 6 & 0 & 0 & 10 & -10 \\1 & 0 & 6 & 8 & 0 & -20 & 24 \\1 & 1 & 6 & 0 & 0 & 10 & -10 \\1 & 2 & 6 & 6 & 0 & -5 & 3 \\0 & 0 & 7 & 4 & 2 & 0 & 0
    \end{tabular} 
    \caption{Fourier coefficients of $\begin{psmallmatrix} a & b/2 \\ b/2 & c \end{psmallmatrix}$ in the basic forms $f_1, f_2, g_1, g_2$, $a + c \le 7$}
    \label{tab:coeffs}
\end{figure}
\clearpage
\begin{figure}
    \begin{align*} &J^2 = \\
& f_{1}^{8} f_{2}^{2} g_{1}^{4} + 22 f_{1}^{7} f_{2}^{3} g_{1}^{4} + 119 f_{1}^{6} f_{2}^{4} g_{1}^{4} - 22 f_{1}^{5} f_{2}^{5} g_{1}^{4} + f_{1}^{4} f_{2}^{6} g_{1}^{4} + 2 f_{1}^{8} f_{2}^{2} g_{1}^{3} g_{2} + 46 f_{1}^{7} f_{2}^{3} g_{1}^{3} g_{2} + 282 f_{1}^{6} f_{2}^{4} g_{1}^{3} g_{2} \\
& + 194 f_{1}^{5} f_{2}^{5} g_{1}^{3} g_{2} - 42 f_{1}^{4} f_{2}^{6} g_{1}^{3} g_{2} + 2 f_{1}^{3} f_{2}^{7} g_{1}^{3} g_{2} + f_{1}^{8} f_{2}^{2} g_{1}^{2} g_{2}^{2} + 22 f_{1}^{7} f_{2}^{3} g_{1}^{2} g_{2}^{2} + 120 f_{1}^{6} f_{2}^{4} g_{1}^{2} g_{2}^{2} + 120 f_{1}^{4} f_{2}^{6} g_{1}^{2} g_{2}^{2} \\
& - 22 f_{1}^{3} f_{2}^{7} g_{1}^{2} g_{2}^{2} + f_{1}^{2} f_{2}^{8} g_{1}^{2} g_{2}^{2} - 2 f_{1}^{7} f_{2}^{3} g_{1} g_{2}^{3} - 42 f_{1}^{6} f_{2}^{4} g_{1} g_{2}^{3} - 194 f_{1}^{5} f_{2}^{5} g_{1} g_{2}^{3} + 282 f_{1}^{4} f_{2}^{6} g_{1} g_{2}^{3} - 46 f_{1}^{3} f_{2}^{7} g_{1} g_{2}^{3} \\
& + 2 f_{1}^{2} f_{2}^{8} g_{1} g_{2}^{3} + f_{1}^{6} f_{2}^{4} g_{2}^{4} + 22 f_{1}^{5} f_{2}^{5} g_{2}^{4} + 119 f_{1}^{4} f_{2}^{6} g_{2}^{4} - 22 f_{1}^{3} f_{2}^{7} g_{2}^{4} + f_{1}^{2} f_{2}^{8} g_{2}^{4} - 2 f_{1}^{7} f_{2} g_{1}^{5} - 110 f_{1}^{6} f_{2}^{2} g_{1}^{5} \\
& - 166 f_{1}^{5} f_{2}^{3} g_{1}^{5} - 12 f_{1}^{4} f_{2}^{4} g_{1}^{5} - 4 f_{1}^{7} f_{2} g_{1}^{4} g_{2} - 242 f_{1}^{6} f_{2}^{2} g_{1}^{4} g_{2} - 450 f_{1}^{5} f_{2}^{3} g_{1}^{4} g_{2} - 438 f_{1}^{4} f_{2}^{4} g_{1}^{4} g_{2} - 24 f_{1}^{3} f_{2}^{5} g_{1}^{4} g_{2} \\
& - 2 f_{1}^{7} f_{2} g_{1}^{3} g_{2}^{2} - 142 f_{1}^{6} f_{2}^{2} g_{1}^{3} g_{2}^{2} + 62 f_{1}^{5} f_{2}^{3} g_{1}^{3} g_{2}^{2} + 110 f_{1}^{4} f_{2}^{4} g_{1}^{3} g_{2}^{2} - 370 f_{1}^{3} f_{2}^{5} g_{1}^{3} g_{2}^{2} - 10 f_{1}^{2} f_{2}^{6} g_{1}^{3} g_{2}^{2} - 10 f_{1}^{6} f_{2}^{2} g_{1}^{2} g_{2}^{3} \\
& + 370 f_{1}^{5} f_{2}^{3} g_{1}^{2} g_{2}^{3} + 110 f_{1}^{4} f_{2}^{4} g_{1}^{2} g_{2}^{3} - 62 f_{1}^{3} f_{2}^{5} g_{1}^{2} g_{2}^{3} - 142 f_{1}^{2} f_{2}^{6} g_{1}^{2} g_{2}^{3} + 2 f_{1} f_{2}^{7} g_{1}^{2} g_{2}^{3} + 24 f_{1}^{5} f_{2}^{3} g_{1} g_{2}^{4} - 438 f_{1}^{4} f_{2}^{4} g_{1} g_{2}^{4} \\
& + 450 f_{1}^{3} f_{2}^{5} g_{1} g_{2}^{4} - 242 f_{1}^{2} f_{2}^{6} g_{1} g_{2}^{4} + 4 f_{1} f_{2}^{7} g_{1} g_{2}^{4} - 12 f_{1}^{4} f_{2}^{4} g_{2}^{5} + 166 f_{1}^{3} f_{2}^{5} g_{2}^{5} - 110 f_{1}^{2} f_{2}^{6} g_{2}^{5} + 2 f_{1} f_{2}^{7} g_{2}^{5} + f_{1}^{6} g_{1}^{6} \\
& + 152 f_{1}^{5} f_{2} g_{1}^{6} + 48 f_{1}^{4} f_{2}^{2} g_{1}^{6} + 2 f_{1}^{6} g_{1}^{5} g_{2} + 340 f_{1}^{5} f_{2} g_{1}^{5} g_{2} + 300 f_{1}^{4} f_{2}^{2} g_{1}^{5} g_{2} + 96 f_{1}^{3} f_{2}^{3} g_{1}^{5} g_{2} + f_{1}^{6} g_{1}^{4} g_{2}^{2} + 212 f_{1}^{5} f_{2} g_{1}^{4} g_{2}^{2} \\
& - 312 f_{1}^{4} f_{2}^{2} g_{1}^{4} g_{2}^{2} - 394 f_{1}^{3} f_{2}^{3} g_{1}^{4} g_{2}^{2} + 24 f_{1}^{2} f_{2}^{4} g_{1}^{4} g_{2}^{2} + 24 f_{1}^{5} f_{2} g_{1}^{3} g_{2}^{3} - 540 f_{1}^{4} f_{2}^{2} g_{1}^{3} g_{2}^{3} - 540 f_{1}^{2} f_{2}^{4} g_{1}^{3} g_{2}^{3} - 24 f_{1} f_{2}^{5} g_{1}^{3} g_{2}^{3}\\
& + 24 f_{1}^{4} f_{2}^{2} g_{1}^{2} g_{2}^{4} + 394 f_{1}^{3} f_{2}^{3} g_{1}^{2} g_{2}^{4} - 312 f_{1}^{2} f_{2}^{4} g_{1}^{2} g_{2}^{4} - 212 f_{1} f_{2}^{5} g_{1}^{2} g_{2}^{4} + f_{2}^{6} g_{1}^{2} g_{2}^{4} - 96 f_{1}^{3} f_{2}^{3} g_{1} g_{2}^{5} + 300 f_{1}^{2} f_{2}^{4} g_{1} g_{2}^{5} \\
& - 340 f_{1} f_{2}^{5} g_{1} g_{2}^{5} + 2 f_{2}^{6} g_{1} g_{2}^{5} + 48 f_{1}^{2} f_{2}^{4} g_{2}^{6} - 152 f_{1} f_{2}^{5} g_{2}^{6} + f_{2}^{6} g_{2}^{6} - 64 f_{1}^{4} g_{1}^{7} - 144 f_{1}^{4} g_{1}^{6} g_{2} - 128 f_{1}^{3} f_{2} g_{1}^{6} g_{2} \\
& - 92 f_{1}^{4} g_{1}^{5} g_{2}^{2} + 264 f_{1}^{3} f_{2} g_{1}^{5} g_{2}^{2} + 32 f_{1}^{2} f_{2}^{2} g_{1}^{5} g_{2}^{2} - 12 f_{1}^{4} g_{1}^{4} g_{2}^{3} + 296 f_{1}^{3} f_{2} g_{1}^{4} g_{2}^{3} + 4 f_{1}^{2} f_{2}^{2} g_{1}^{4} g_{2}^{3} + 96 f_{1} f_{2}^{3} g_{1}^{4} g_{2}^{3} \\
& - 96 f_{1}^{3} f_{2} g_{1}^{3} g_{2}^{4} + 4 f_{1}^{2} f_{2}^{2} g_{1}^{3} g_{2}^{4} - 296 f_{1} f_{2}^{3} g_{1}^{3} g_{2}^{4} - 12 f_{2}^{4} g_{1}^{3} g_{2}^{4} + 32 f_{1}^{2} f_{2}^{2} g_{1}^{2} g_{2}^{5} - 264 f_{1} f_{2}^{3} g_{1}^{2} g_{2}^{5} - 92 f_{2}^{4} g_{1}^{2} g_{2}^{5} \\
& + 128 f_{1} f_{2}^{3} g_{1} g_{2}^{6} - 144 f_{2}^{4} g_{1} g_{2}^{6} - 64 f_{2}^{4} g_{2}^{7} - 128 f_{1}^{2} g_{1}^{6} g_{2}^{2} - 80 f_{1}^{2} g_{1}^{5} g_{2}^{3} - 128 f_{1} f_{2} g_{1}^{5} g_{2}^{3} + 48 f_{1}^{2} g_{1}^{4} g_{2}^{4} + 48 f_{2}^{2} g_{1}^{4} g_{2}^{4} \\
& + 128 f_{1} f_{2} g_{1}^{3} g_{2}^{5} - 80 f_{2}^{2} g_{1}^{3} g_{2}^{5} - 128 f_{2}^{2} g_{1}^{2} g_{2}^{6} - 64 g_{1}^{5} g_{2}^{4} - 64 g_{1}^{4} g_{2}^{5}. \end{align*}
    \caption{Expression for $J^2$ in terms of the basic forms $f_1, f_2, g_1, g_2$}
    \label{fig:my_label}
\end{figure}

\bibliographystyle{plainnat}
\bibliofont
\bibliography{refs}

\end{document}